\begin{document}
	
\newtheorem{thm}{Theorem}[section]
\newtheorem{lem}[thm]{Lemma}
\newtheorem{cor}[thm]{Corollary}
\newtheorem{prop}[thm]{Proposition}
\newtheorem*{mainthm}{Main Theorem}
\newtheorem{thmx}{Theorem}
\renewcommand{\thethmx}{\Alph{thmx}} 
\newtheorem{corx}[thmx]{Corollary}

\theoremstyle{definition}
\newtheorem*{defi}{Definition}
\newtheorem*{exam}{Example}
\newtheorem*{rmk}{Remark}
\newtheorem*{ques}{Question}
\newtheorem*{conj}{Conjecture}
\newtheorem*{nota}{Notations}
	
\renewcommand{\theequation}{\arabic{section}.\arabic{equation}}
\makeatletter
\@addtoreset{equation}{section}
\makeatother
	
\newcommand{\ind}{\mathop{\mathrm{index}}}
\newcommand{\R}{\mathbb{R}}
\newcommand{\Z}{\mathbb{Z}}
\newcommand{\N}{\mathbb{N}}
\newcommand{\J}{\mathfrak{J}}
\newcommand{\C}{\mathbb{C}}
\newcommand{\D}{\mathbb{D}}
\newcommand{\EC}{\mathbb{\widehat{C}}}

\newcommand{\Area}{\textup{Area}}
\newcommand{\MA}{\mathcal{A}}
\newcommand{\MH}{\mathcal{H}}
\newcommand{\MM}{\mathcal{M}}
\newcommand{\MP}{\mathcal{P}}
\newcommand{\MU}{\mathcal{U}}
\newcommand{\Crit}{\textup{Crit}}
\newcommand{\ii}{\textup{i}}

\author{Xiaole He, Yingqing Xiao and Fei Yang}

\address{School of Mathematics, Hunan University, Changsha, 410082, P.R. China}
\email{hxlvcl@foxmail.com}

\address{School of Mathematics, Hunan University, Changsha, 410082, P.R. China}
\email{ouxyq@hnu.edu.cn}

\address{School of Mathematics, Nanjing University, Nanjing, 210093, P.R. China}
\email{yangfei@nju.edu.cn}
	
\title[Generalized Sierpi\'{n}ski gasket Julia sets]{Rational maps whose Julia sets are generalized Sierpi\'{n}ski gaskets}

\begin{abstract}
It has been shown that the Sierpi\'nski gasket-like sets can appear as the Julia sets of some geometrically finite rational maps. In this paper we prove that such type of Julia sets can also appear in the rational maps containing Siegel disks, Cremer points or which are infinitely renormalizable. Based on this, we prove the existence of gasket Julia sets with positive area. Moreover, we present a criterion which guarantees the existence of gasket Julia sets in some rational maps having exactly one fixed attracting or parabolic basin.
\end{abstract}

\date{\today}

\keywords {Julia set; generalized Sierpi\'nski gasket; Fatou component; Siegel disk; Cremer point}
\subjclass[2020]{Primary 37F10; Secondary 37F25, 37F35.}

\maketitle

\section{Introduction}

\subsection{Backgrounds}
The attractors generated by iterated function systems in fractal geometry and the Julia sets generated by rational maps in complex dynamics share many similar geometric and topological properties. Sierpi\'{n}ski carpets and gaskets are typical objects which were studied in these two fields.

For Sierpi\'{n}ski carpets, there exists a topological characterization due to Whyburn \cite{Why58}: A \emph{Sierpi\'{n}ski carpet} (or \emph{carpet} in short) is a subset in the sphere which is compact, connected, locally connected, has empty interior and has the property that the complementary domains are bounded by pairwise disjoint simple closed curves. All carpets are homeomorphic to the standard middle third carpet. The first example of carpet Julia sets was found by Milnor and Tan \cite[Appendix F]{Mil93}. Later, more carpet Julia sets were found in \cite[Section 5.6]{Pil94}, \cite{DLU05}, \cite{Ste06}, \cite{Ste08}, \cite{DFGJ14}, \cite{XQY14} and \cite{Yan18} etc. (see also the references therein). In particular, in the $\lambda$-parameter plane of the \textit{McMullen family}
\begin{equation}\label{equ:McM}
f_\lambda(z)=z^m+\lambda/z^n,
\end{equation}
where $m\geqslant 2$, $n\geqslant 1$ and $\lambda\in\C\setminus\{0\}$, there are infinitely many hyperbolic components called \textit{Sierpi\'{n}ski holes} whose corresponding Julia sets are Sierpi\'{n}ski carpets (see \cite{DLU05}, \cite{Dev05} and \cite{Ste06}).

However, for Sierpi\'{n}ski gaskets, there is no such topological characterization as Sierpi\'{n}ski carpets. The classical \textit{Sierpi\'{n}ski gasket} (or \textit{triangle}) is obtained by repeatedly replacing an equilateral triangle by three triangles of half the height, and this set is the attractor of the iterated function system consisting three similarities with contraction factor $1/2$ (see the left picture in Figure \ref{Fig:gasket}). The Julia set which is homeomorphic to the classical Sierpi\'{n}ski gasket was firstly found by Ushiki \cite{Ush91} and this Julia set is generated by the specific McMullen map $f_\lambda(z)=z^2+\lambda/z$ with $\lambda=-\frac{16}{27}$ (see also \cite{Kam00}, \cite{Dev04} and the right picture in Figure \ref{Fig:gasket}).

\begin{figure}[!htpb]
  \setlength{\unitlength}{1mm}
  \centering
  \includegraphics[width=0.42\textwidth]{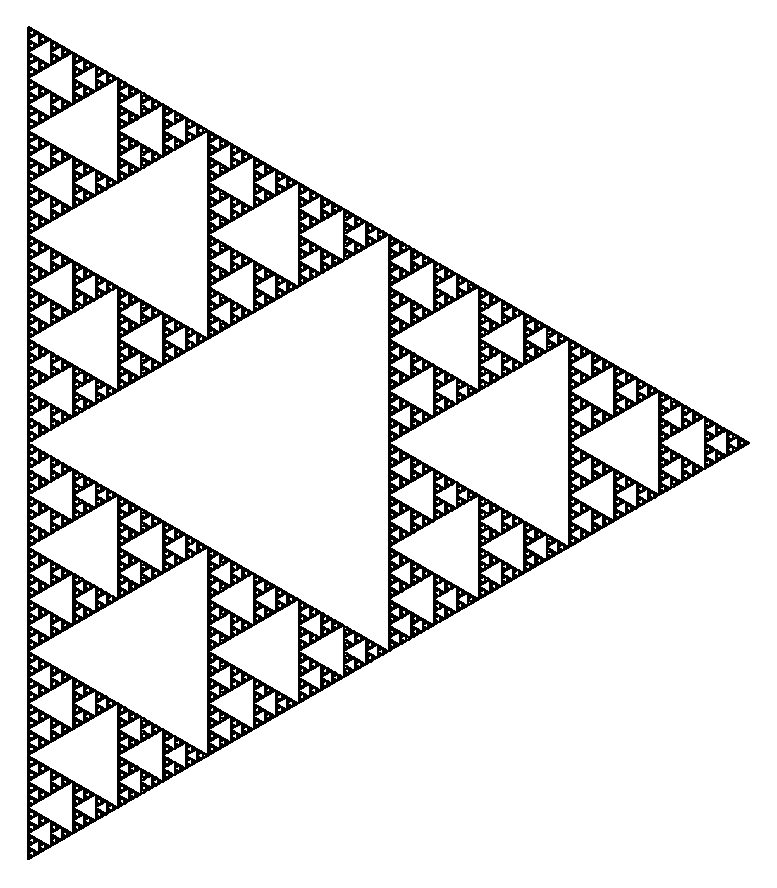} \quad
  \includegraphics[width=0.42\textwidth]{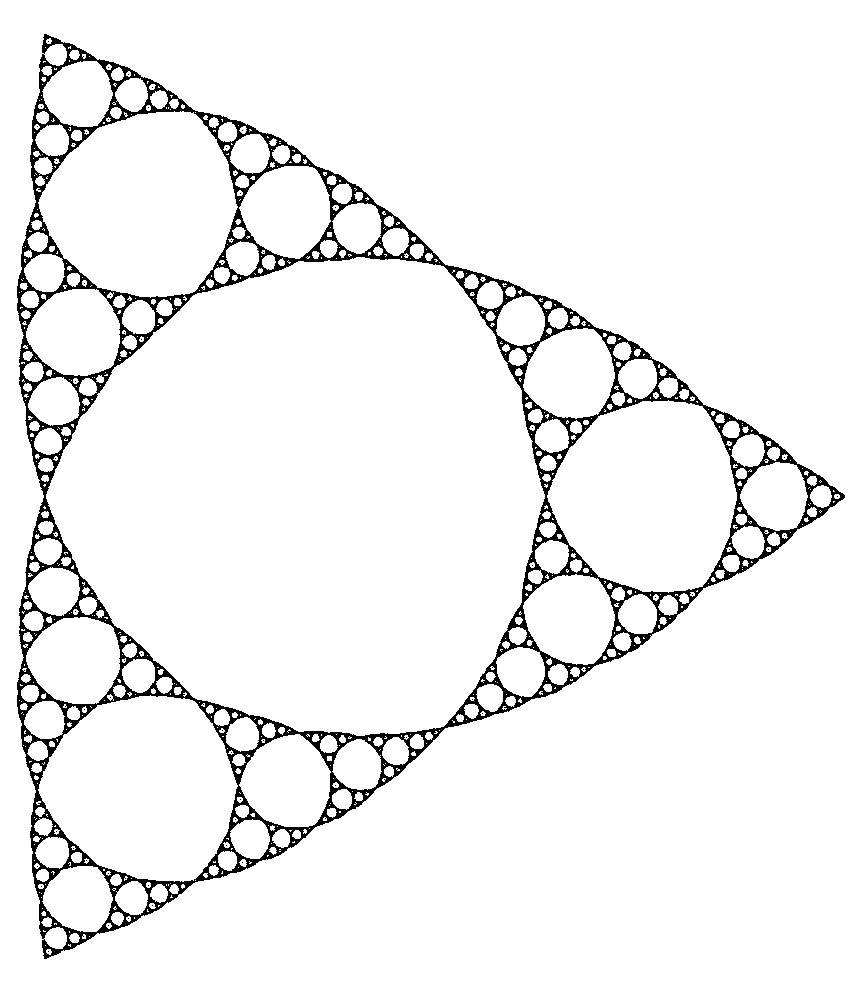}
  \caption{The classical Sierpi\'{n}ski gasket and a Sierpi\'{n}ski gasket Julia set. The picture on the right is generated by the McMullen map $f_\lambda(z)=z^2+\lambda/z$, where $\lambda=-\frac{16}{27}$ such that the critical point $-\frac{2}{3}$ has the finite forward orbit $-\frac{2}{3}\mapsto\frac{4}{3}\mapsto\frac{4}{3}$ on the boundary of the immediate super-attracting basin of the infinity. }
  \label{Fig:gasket}
\end{figure}

It turns out that it is difficult to find a post-critically finite rational map, such that it is not the iteration of Ushiki's example and that its Julia set is homeomorphic to the classical Sierpi\'{n}ski gasket. In view of this, Devaney et al. introduced the concept of \textit{generalized Sierpi\'{n}ski gaskets}, which are obtained by a similar recursive process as the classical Sierpi\'{n}ski gasket, but at each stage, $N$ homeomorphic copies of an $N$-polygon are removed ($N=3$ for the classical Sierpi\'{n}ski gasket, see \cite[Section 2.1]{DRS07}). They proved that if the McMullen family $f_\lambda$ is of \textit{Misiurewicz-Sierpi\'{n}ski} type, i.e., each critical point of $f_\lambda$ in $\C\setminus\{0\}$ is strictly preperiodic and lies on the boundary of the immediate super-attracting basin of the infinity, then the Julia set $J(f_\lambda)$ is a generalized Sierpi\'{n}ski gasket. However, unlike the Sierpi\'{n}ski carpets, except the symmetric case, each pair of these Julia sets are not only topologically distinct but also dynamically different (see also \cite{BDL06} and \cite{DHL08}).

Recently, the Sierpi\'{n}ski gasket-like sets attracted many people's interest, in the fields not only of fractal geometry and complex dynamics, but also of geometric group theory (see \cite{LLMM23}) and quasiconformal geometry (see \cite{Nta19}, \cite{LN24}, \cite{LZ25}). Before stating our main results, we first introduce the following definition.

\begin{defi}[Generalized Sierpi\'nski gaskets]\label{GSgasket}
Let $K\subset\EC$ be a locally connected continuum with empty interior that has the following properties:
    \begin{enumerate}
        \item each complementary component of $K$ is a Jordan domain;
        \item there exists $N\geqslant 1$ such that the boundaries of any two complementary components of $K$ share at most $N$ points;
        \item no three complementary components have a common boundary point; and
        \item the contact graph corresponding to $K$, obtained by assigning a vertex to each complementary component and an edge if two touch, is connected.
    \end{enumerate}
Then $K$ is called a \textit{generalized Sierpi\'nski gasket} (or \textit{gasket} in short).
\end{defi}

The definition above is inspired by \cite{LN24}, where $N=1$ is required. However, note that for the classical Sierpi\'{n}ski gasket, we have $N=3$.
It is not hard to see that the generalized Sierpi\'{n}ski gaskets introduced in \cite{DRS07} satisfy all the conditions in the definition above (see the proof of Lemma \ref{lem:crit-on-B}).
Hence the above definition can be seen as a generalization of both \cite{DRS07} and \cite{LN24}.

\subsection{Main results}

For the McMullen family $f_\lambda(z)=z^m+\lambda/z^n$, where $m\geqslant 2$, $n\geqslant 1$ and $\lambda\in\C\setminus\{0\}$, there exists a unique unbounded hyperbolic component $\mathcal{H}_0$ such that if $\lambda\in\mathcal{H}_0$, then the Julia set $J(f_\lambda)$ of $f_\lambda$ is a Cantor set (see \cite{DLU05}) and if $\lambda\in\partial \mathcal{H}_0$, then $J(f_\lambda)$ is connected (see \cite{DR13}).
As we have mentioned above, if $f_\lambda$ is a Misiurewicz-Sierpi\'{n}ski map, then $J(f_\lambda)$ is a generalized Sierpi\'{n}ski gasket \cite{DRS07}.
In the case $m=n\geqslant 3$, if $f_\lambda$ is of Misiurewicz-Sierpi\'{n}ski, then $\lambda\in\partial\mathcal{H}_0$ (see \cite[Theorem 6.5]{QRWY15}).
We first extend the result of \cite{DRS07} to a general case.

\begin{thm}\label{thm:McM-family}
If $\lambda\in\partial \mathcal{H}_0$, then the Julia set of $f_\lambda(z)=z^m+\lambda/z^n$ with $m=n\geqslant 3$ is a generalized Sierpi\'nski gasket.
\end{thm}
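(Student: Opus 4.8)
The plan is to describe the Fatou set of $f_\lambda$ completely and then verify the four conditions of Definition~\ref{GSgasket} one at a time. Since $\lambda\in\partial\mathcal{H}_0$, the set $J(f_\lambda)$ is connected by \cite{DR13}, and as the Julia set of a rational map with nonempty Fatou set it is automatically compact, of empty interior, so the only topological feature that still needs proof is local connectivity. I would start from the immediate basin $B_\infty$ of the super-attracting fixed point $\infty$. Connectivity of $J(f_\lambda)$ forces $B_\infty$ to be simply connected, and because $m=n\geqslant 3$ its only interior critical point is $\infty$: the $2n$ free critical points are the $2n$-th roots of $\lambda$ and lie on $\partial B_\infty$ once $\lambda\in\partial\mathcal{H}_0$, so the B\"ottcher coordinate conjugates $f_\lambda|_{B_\infty}$ to $w\mapsto w^{n}$ on $\D$. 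I would also record the $\Z/2n$ rotational symmetry $z\mapsto e^{\pi\ii/n}z$ of $f_\lambda$, under which both $B_\infty$ and the trap door $T_\lambda$ (the component of $f_\lambda^{-1}(B_\infty)$ containing $0$) are invariant, and note that $f_\lambda\colon T_\lambda\to B_\infty$ is a degree-$n$ cover branched only over $\infty$, whence $f_\lambda^{-1}(B_\infty)=B_\infty\cup T_\lambda$ with no other components. A short computation shows the $2n$ free critical points collapse onto just the two critical values $\pm 2\sqrt{\lambda}$, which lie in $J(f_\lambda)$; consequently $f_\lambda$ has no attracting or parabolic cycle other than $\infty$ and no rotation domain, so every Fatou component is an iterated preimage of $B_\infty$. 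Since critical values never lie in a Fatou component, every such preimage other than $B_\infty$ and $T_\lambda$ is mapped conformally, so condition (1) reduces entirely to the single assertion that $\partial B_\infty$ is a Jordan curve.

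Establishing that $\partial B_\infty$ is a Jordan curve is the step I expect to be the main obstacle. For Misiurewicz--Sierpi\'nski parameters it is contained in \cite{DRS07}, since the map is then geometrically finite; but a generic $\lambda\in\partial\mathcal{H}_0$ need not be geometrically finite, because the critical values $\pm 2\sqrt{\lambda}$ land on $\partial B_\infty$ with forward orbits that may be infinite and even recurrent under the expanding degree-$n$ boundary dynamics. The task is therefore to prove that the B\"ottcher conjugacy extends continuously to $\partial\D$ without assuming geometric finiteness. I would attempt this by exhibiting a shrinking structure for the pullbacks of a fixed round disk away from the finitely many obstructions coming from the critical points on $\partial B_\infty$, showing that the diameters of the preimages tend to zero. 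The same null property, together with the Jordan-curve conclusion and the fact that $T_\lambda$ and all deeper components are (branched or conformal) pullbacks of a Jordan domain, yields both condition (1) in full and the local connectivity of $J(f_\lambda)$ by the standard characterization for connected sets whose complementary Jordan components form a null family.

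Third, I would pin down the contact structure. Using the degree-$n$ branched cover $f_\lambda\colon T_\lambda\to B_\infty$ together with the $\Z/2n$ symmetry, I would compute $\partial B_\infty\cap\partial T_\lambda$ and show it is a finite, rotationally symmetric set; every other pair of touching components arises by pulling this configuration back through $f_\lambda$, so the number of common boundary points between any two Fatou components is bounded by a fixed $N$, which is condition (2). For condition (3) I would argue locally at a contact point $p$: because $p$ is a regular point for the relevant branches and the boundary dynamics is expanding there, only the two components producing $p$ can meet at $p$, so no three complementary components share a boundary point. Finally, condition (4) would follow from the connectedness of $J(f_\lambda)$: a disconnection of the contact graph would separate the corresponding unions of Fatou-component closures, contradicting that $J(f_\lambda)$ is a connected set of empty interior that every point of which is accessible from the Fatou components.

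In summary, once the Jordan-curve property of $\partial B_\infty$ is secured for non-geometrically-finite parameters and the triple-contact exclusion of condition (3) is verified, the remaining content of the theorem is combinatorial bookkeeping dictated by the $\Z/2n$ symmetry and the relation $f_\lambda^{-1}(B_\infty)=B_\infty\cup T_\lambda$, and $J(f_\lambda)$ is then a generalized Sierpi\'nski gasket in the sense of Definition~\ref{GSgasket}.
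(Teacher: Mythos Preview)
Your proposal has a genuine gap: you assume that for every $\lambda\in\partial\mathcal{H}_0$ the free critical points lie on $\partial B_\infty$, and hence that the critical values lie in $J(f_\lambda)$ and that the only Fatou components are iterated preimages of $B_\infty$. This is false. By \cite[Theorem 6.5]{QRWY15} (Theorem~\ref{thm:QRWY}(2) in the paper), the description of $\partial\mathcal{H}_0$ is a dichotomy: either $\Crit_\lambda\subset\partial B_\lambda$, \emph{or} $\partial B_\lambda$ carries a parabolic cycle while the free critical points lie in parabolic basins. In the parabolic alternative the critical values $\pm 2\sqrt{\lambda}$ are in the Fatou set, there \emph{is} a second type of periodic Fatou component, and the Fatou set splits into two families $\MA$ and $\MP$ whose mutual contact pattern requires a separate analysis (Lemma~\ref{lem:no-intersect}, using the renormalization to $Q_{1/4}$ of Lemma~\ref{lem:copy}). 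Your argument as written treats only the first alternative and draws conclusions (``no parabolic cycle'', ``every Fatou component is a preimage of $B_\infty$'') that are simply wrong in the second.

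Two further points. First, the Jordan-curve property of $\partial B_\lambda$ that you flag as the main obstacle is not proved from scratch in the paper either: it is imported from \cite{QWY12}, and local connectivity of $J(f_\lambda)$ in the critically-infinite, non-parabolic case likewise comes from the Yoccoz-puzzle machinery there (Theorem~\ref{thm:QRWY}(3)); your sketched shrinking argument would have to reproduce substantial prior work. Second, your deduction of condition~(4) from the connectedness of $J(f_\lambda)$ is not valid as stated: a disconnection of the contact graph does not obviously disconnect $J(f_\lambda)$, since points of $J(f_\lambda)$ need not lie on the boundary of any individual Fatou component (buried points are not excluded a priori). The paper establishes~(4) instead by an inductive Riemann--Hurwitz argument showing that $f_\lambda^{-k}(\overline{B}_\lambda)$ (respectively $\overline{\MU}_k$ in the parabolic case) is connected for every $k\geqslant 0$.
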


We guess that the result in Theorem \ref{thm:McM-family} is also true for $m\neq n$ but the proof in this paper relies on some results in \cite{QWY12} and \cite{QRWY15}, where only $m=n\geqslant 3$ was considered.
Besides critically finite maps, $\partial \mathcal{H}_0$ also contains some critically infinite and parabolic parameters (see left pictures in Figures \ref{Fig:Julia-gasket-Siegel} and \ref{Fig:Julia-gasket-McM}).

\begin{figure}[!htpb]
  \setlength{\unitlength}{1mm}
  \centering
  \includegraphics[width=0.45\textwidth]{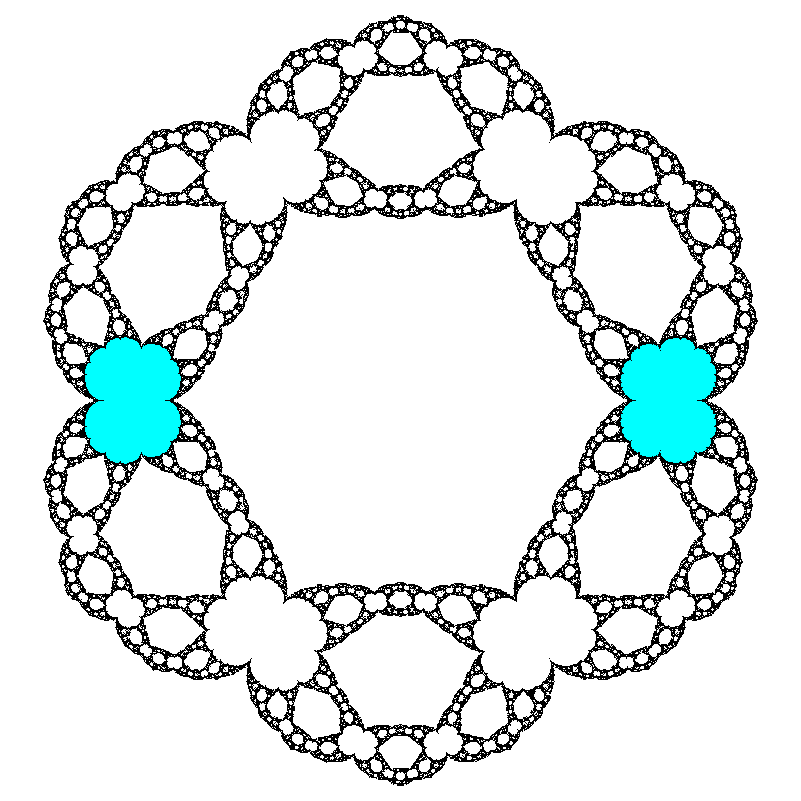}\quad
  \includegraphics[width=0.45\textwidth]{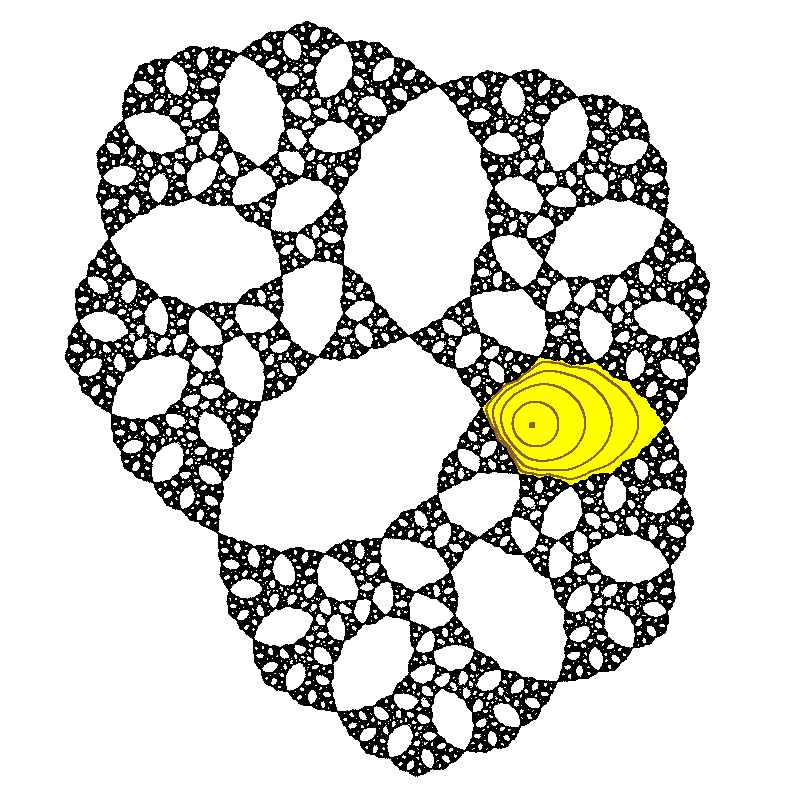}
  \caption{Left: A generalized Sierpi\'{n}ski gasket Julia set of $f_\lambda(z)=z^3+\lambda/z^3$ with parabolic points (whose immediate parabolic basins are colored cyan). Right: A generalized Sierpi\'{n}ski gasket Julia set of a quadratic rational map containing a Siegel disk (colored yellow).}
  \label{Fig:Julia-gasket-Siegel}
\end{figure}

To study the quasisymmetric geometry of Julia sets, it is important to know whether they have zero area. Bonk, Lyubich and Merenkov studied the quasisymmetric geometry of carpet Julia sets of critically finite rational maps \cite{BLM16} and later some results were extended to the semi-hyperbolic case \cite{QYZ19}. These carpet Julia sets have zero area although some other carpet Julia sets may have positive area \cite{FY20}.  Recently, Luo, Ntalampekos and Zhang studied the quasisymmetric geometry of gasket Julia sets of geometrically finite rational maps (see \cite{LN24}, \cite{LZ25}).
To the best of our knowledge, except the examples in Theorem \ref{thm:McM-family} (These Julia sets have zero area by \cite[Theorem 5.4]{QRWY15}), all the known gasket Julia sets are generated by geometrically finite rational maps and they have Hausdorff dimension strictly less than two \cite{Urb94}. In this paper we prove the following result.

\begin{thm}\label{thm:area-dim}
There exist rational maps having gasket Julia sets with positive area, and they either contain a Cremer point or are infinitely renormalizable.
\end{thm}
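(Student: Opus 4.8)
The plan is to stay inside the gasket regime already established and to import the positive-area technology of Buff and Ch\'eritat. By Theorem~\ref{thm:McM-family} every parameter on $\partial\mathcal{H}_0$ (for $m=n\geqslant 3$) yields a gasket Julia set, and the gasket constructions with neutral dynamics established earlier in the paper likewise supply one-parameter families of rational maps with gasket Julia sets carrying a neutral fixed point whose multiplier $e^{2\pi\ii\alpha(\lambda)}$ varies along the parameter arc. Since the gasket property is a global statement about the touching pattern of the (countably many) Jordan-domain Fatou components, it is stable along such an arc; hence it suffices to locate on the arc a value $\lambda$ with $\Area(J(f_\lambda))>0$. For these maps the Fatou set is the grand orbit of the immediate super-attracting basin of $\infty$ together with, at most, one rotation domain, so $\Area(J(f_\lambda))>0$ is equivalent to that grand orbit failing to have full measure.

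First I would reduce the area question to the local dynamics at the neutral fixed point. Near that point a single critical orbit governs the first-return dynamics, so the associated near-parabolic renormalization is of quadratic type; the aim is to place the germ in the Inou--Shishikura invariant class, for which near-parabolic renormalization is defined and enjoys uniform compactness together with control of the post-critical set. Granting this, one transplants the Buff--Ch\'eritat scheme: starting from a parabolic parameter on the arc (rational rotation number) and using parabolic explosion, build a sequence $\lambda_n$ with $\alpha(\lambda_n)\to\alpha_\infty$ whose rotation numbers are approximated super-exponentially fast by rationals $p_n/q_n$, arranged so that near the cycle a definite amount of area is captured and is not lost in passing to the limit.

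The combinatorics of this approximation then selects the two advertised cases. Choosing $\alpha_\infty$ to be a non-Brjuno number makes the neutral fixed point a Cremer point; choosing satellite renormalization data at every level makes the limit map infinitely renormalizable. In either case the Buff--Ch\'eritat lost-area estimate, read through the renormalization tower, gives a uniform lower bound $\Area(J(f_{\lambda_\infty}))\geqslant a>0$; combined with the gasket property inherited from Theorem~\ref{thm:McM-family} (or from the earlier constructions), this produces the required rational maps, which are by construction Cremer or infinitely renormalizable.

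The hard part will be verifying that the near-parabolic renormalization of these rational maps genuinely lands in the Inou--Shishikura class, since all of the quantitative area control rests on that invariance; for maps outside the normalized quadratic family this requires either checking the covering structure of the return map explicitly or extending the invariant-class argument to the germs at hand. A second, more delicate point is compatibility: one must ensure that the parameter path realizing the Buff--Ch\'eritat tuning can be kept inside the gasket locus (for instance on $\partial\mathcal{H}_0$), so that $\alpha(\lambda)$ varies continuously and surjectively onto an interval of irrationals there, guaranteeing that the topological and measure-theoretic constructions do not conflict.
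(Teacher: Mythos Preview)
Your proposal has a genuine gap and also misses the paper's much simpler shortcut.

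First, the McMullen locus $\partial\mathcal{H}_0$ cannot serve as the arena: the paper itself notes (citing \cite[Theorem 5.4]{QRWY15}) that every Julia set in Theorem~\ref{thm:McM-family} has area zero, and Theorem~\ref{thm:QRWY}(2) shows that for $\lambda\in\partial\mathcal{H}_0$ the map $f_\lambda$ is either Misiurewicz--Sierpi\'nski type or has a parabolic cycle on $\partial B_\lambda$. There is no neutral fixed point with freely varying multiplier there, so your proposed arc of irrational rotation numbers does not exist inside $\partial\mathcal{H}_0$. Your fallback reference to ``the gasket constructions with neutral dynamics established earlier in the paper'' is too vague to carry the argument, and your claim that the gasket property is ``stable along such an arc'' is unjustified: the touching pattern of Fatou components is exactly the kind of fine structure that can change under perturbation.

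Second, even granting a suitable family, you are proposing to \emph{reproduce} the Buff--Ch\'eritat construction inside it, which requires placing the return germs in the Inou--Shishikura class. You correctly flag this as the hard part; in fact it is an open-ended program, not a proof.

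The paper avoids all of this. It introduces the quadratic Siegel family $q_\lambda(z)=(z^2-\lambda)/\big(e^{2\pi\ii\theta}(z-\tfrac{1+\lambda}{2})\big)$ with $\theta$ of bounded type, and proves directly (Lemmas~\ref{lem:gasket-pre} and~\ref{lem:gasket-Sie}) that $J(q_\lambda)$ is a gasket whenever $q_\lambda$ is renormalizable with small filled Julia set having empty interior. Then, instead of redoing Buff--Ch\'eritat, it invokes McMullen's universality of the Mandelbrot set (Lemma~\ref{lem:McM-univ}): there is a homeomorphic copy $\mathcal{M}$ of $M$ in the $\lambda$-plane such that $q_{\lambda(c)}^{\circ p}$ is hybrid equivalent to $Q_c$. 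Choosing $c$ so that $Q_c$ is Cremer or infinitely renormalizable with $\Area(J(Q_c))>0$ (Theorem~\ref{thm:positive-area}, used as a black box), the hybrid conjugacy --- being quasiconformal --- transports positive area to $J(q_\lambda)$, and the renormalizability hypothesis of Lemma~\ref{lem:gasket-Sie} is satisfied automatically. No Inou--Shishikura verification, no stability argument, and no parameter-arc construction are needed.
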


The existence of quadratic polynomials having Julia sets with positive area was known (see \cite{BC12}, \cite{AL22}, \cite{DL23a}), and they either contain a Cremer point, a Siegel disk or are infinitely renormalizable. We will use polynomial-like renormalization theory to ``arrange" some of these quadratic polynomial Julia sets in the gasket Julia sets of a family of quadratic rational maps such that they have positive area (see \cite{FY20} for a similar idea). Moreover, we will also show that there exist Siegel rational maps having gasket Julia sets with Hausdorff dimension strictly less than two (see the right picture in Figure \ref{Fig:Julia-gasket-Siegel}).

\medskip

In view of that the proof methods for the existence of gasket Julia sets above depend on the special families, it is meaningful to give a criterion to judge whether a given rational map has a gasket Julia set.

\begin{thm}\label{thm:criterion}
Let $f$ be a rational map satisfying the following conditions:
\begin{enumerate}
\item the Julia set $J(f)$ of $f$ is connected;
\item there exist two different Fatou components $U$ and $V$ of $f$ such that $f(U)=U$ and $f^{-1}(U) = U\cup V$;
\item if $W$ is a connected component of $f^{-1}(V)$, then $\mathrm{deg}(f|_W)>\mathrm{deg}(f|_U)$;
\item $\partial U$ contains exactly one critical point, which is simple and strictly preperiodic, and any other critical orbit of $f$ intersects with $U$.
\end{enumerate}
Then $J(f)$ is a generalized Sierpi\'nski gasket.
\end{thm}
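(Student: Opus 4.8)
The plan is to verify, for $K=J(f)$, the four defining conditions of a generalized Sierpi\'nski gasket, after first checking that $J(f)$ is a locally connected continuum with empty interior. For the topological preliminaries I would argue as follows. By hypothesis (4) every critical point of $f$ other than the one on $\partial U$ has a forward orbit meeting the open invariant set $U$, hence lies in the Fatou set; so the unique critical point of $f$ on $J(f)$ is the strictly preperiodic point $c\in\partial U$, and $f$ is geometrically finite. Together with the connectedness of $J(f)$ from hypothesis (1), this yields local connectivity of $J(f)$ by the known local connectivity theorem for geometrically finite rational maps with connected Julia set, while the non-emptiness of the Fatou set gives $J(f)\neq\EC$ and hence empty interior. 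Moreover, by the absence of wandering domains and the classification of periodic Fatou components, the fact that all critical orbits are absorbed by $U$ forces $U$ to be the unique periodic Fatou component; thus every Fatou component is an iterated preimage of $U$, and $\EC\setminus K$ is the union of $U$, $V$, and the components of $f^{-n}(V)$ for $n\geqslant 1$.

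The crux of the proof is to show that $\partial U$ is a Jordan curve. Let $\phi\colon\D\to U$ be a Riemann map; it conjugates $f|_U$ to a finite Blaschke product $B$ of degree $k:=\deg(f|_U)$, and by local connectivity it extends to a continuous $\hat\phi\colon\overline\D\to\overline U$ semiconjugating the boundary covering $B|_{\partial\D}$ to $f|_{\partial U}$. I must prove $\hat\phi|_{\partial\D}$ is injective. Any failure of injectivity yields a cut point of $\overline U$ whose orbit, by the expanding or parabolic boundary dynamics, is (pre)periodic, so the identifications are governed by a finite invariant structure. The decisive observation is that the unique boundary critical point $c$ does \emph{not} pinch $\partial U$: since $c$ is simple, $f$ is locally two-to-one near $c$, and the two local branches of $f^{-1}(\partial U)$ through $c$ are $\partial U$ itself and $\partial V$, so the folding is transverse and $f|_{\partial U}$ is a local homeomorphism at $c$. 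Using this, together with the fact that $c$ is the only critical point on $J(f)$, I would rule out every nontrivial identification and conclude that $\partial U$ is a Jordan curve. This injectivity verification is the main obstacle.

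With $\partial U$ a Jordan curve I pull back along $f$. Each complementary component maps properly onto $U$ after finitely many iterates, and a proper holomorphic map from a simply connected domain onto a Jordan domain whose only boundary critical points are preimages of the transverse contact point $c$ again has Jordan boundary; this gives the property that \emph{each complementary component is a Jordan domain}. For the bound $N$ and the absence of triple points, I analyze the contact set $\partial U\cap\partial V$: its points map under $f$ into $\partial U$ and are controlled by the single simple critical point, so it is finite, which supplies $N$; and since $c$ has local degree two, at most two components meet at any contact point, giving the \emph{no three complementary components share a point} property. These bounds are then transported to every pair of components through the pullback tree, and here hypothesis (3), the inequality $\deg(f|_W)>\deg(f|_U)$, is used to prevent the number of contacts from growing under preimages, keeping the sharing number uniformly bounded.

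Finally, to obtain connectedness of the contact graph I set $X_0=\EC\setminus U$, a closed topological disk, and $X_n=f^{-n}(X_0)$, so that $X_n$ arises from $X_{n-1}$ by deleting the level-$n$ Fatou components and $J(f)=\bigcap_n X_n$. Since $\partial U\cap\partial V\neq\varnothing$ (they meet at $c$), the vertices $U$ and $V$ are adjacent, and pulling this adjacency back through the tree of preimages of $V$ joins every vertex to $U$; connectedness of the contact graph thus follows from the nested-piece structure together with the connectedness of $J(f)$. Having established all four defining conditions, I conclude that $J(f)$ is a generalized Sierpi\'nski gasket. The principal difficulty is the injectivity of the boundary map in the second step, with the uniform combinatorial control of contacts under pullback as the secondary technical point.
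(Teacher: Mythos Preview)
Your overall architecture is right: establish geometric finiteness and local connectivity, prove every Fatou component is a Jordan domain starting from $U$, then verify the three combinatorial properties. The treatment of local connectivity, the uniqueness of $U$ as a periodic Fatou component, the triple-point argument, and the contact-graph connectivity are all essentially what the paper does.

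There is, however, a genuine gap at the step you yourself flag as the main obstacle, and a related misattribution of hypothesis~(3). You propose to show $\partial U$ is a Jordan curve by studying the boundary extension of the Blaschke model and ``ruling out every nontrivial identification'' using only that the unique critical point $c$ on $\partial U$ is simple and hence the fold is transverse. But transversality of $c$ says nothing about possible pinch points of $\overline U$ that are \emph{not} on the critical orbit: a cut point of $\overline U$ can arise at a repelling (pre)periodic point with no critical point in its forward orbit, and your sketch gives no mechanism to exclude this. In the paper this step is not handled by the transversality of $c$ at all; it is exactly where hypothesis~(3) enters. The paper invokes the argument of \cite[Theorem~A]{Yan18}, which uses conditions (1)--(3) --- in particular the degree inequality $\deg(f|_W)>\deg(f|_U)$ for components $W$ of $f^{-1}(V)$ --- to force $\partial U$ to be a Jordan curve. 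Without (3) your Blaschke/lamination outline does not close.

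Correspondingly, your use of hypothesis~(3) in the contact-bound step is misplaced. The uniform bound on $\sharp(\partial U_1\cap\partial U_2)$ in the paper comes from condition~(4), not~(3): since every critical point other than $c$ is eventually absorbed by $U$, there is a uniform $D$ with $\deg(f^{\circ k}\colon U'\to V)\leqslant D$ for every component $U'$ of $f^{-k}(V)$, and one then reduces any touching pair to the single contact $\partial U\cap\partial V=\{c\}$ after at most $D$-to-one pullback. The paper also observes, which you do not, that two \emph{distinct} preimage components at the \emph{same} level cannot touch at all (the case $k_1=k_2$ leads to a contradiction because a second critical point on $J(f)$ would be required). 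So: move condition~(3) to the Jordan-curve step and replace your contact-growth argument by the uniform degree bound coming from condition~(4).
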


A similar criterion as Theorem \ref{thm:criterion} was given in \cite{Yan18} for carpet Julia sets. As an application of Theorem \ref{thm:criterion}, we consider the following one-dimensional family of rational maps (see \cite[Proposition 1.4]{Yan18}):
\begin{equation}\label{equ:g-eta}
g_\eta(z)=\frac{a(z-1)^3(z-b)^3}{z^4}, \text{\quad where }
a=\frac{\eta(\eta-4)^3}{27(\eta-1)^6},~
b=\frac{\eta(1+2\eta)}{4-\eta}
\end{equation}
and $\eta\in\C\setminus\{0,1,4,-\tfrac{1}{2}\}$ is the parameter. A direct computation shows that $g_\eta$ has the critical orbits $b\overset{(3)}{\longmapsto} 0\overset{(4)}{\longmapsto}\infty\overset{(2)}{\longmapsto}\infty$, $\eta\overset{(2)}{\longmapsto} 1\overset{(3)}{\longmapsto} 0$ and a \textit{free} critical point $c=\frac{2(1+2\eta)}{\eta-4}$ with local degree $2$.
Let $U_\eta$ be the immediate super-attracting basin of $\infty$. As an application of Theorem \ref{thm:criterion}, we have the following result.

\begin{cor}\label{cor:gasket-2}
If the critical point $c$ lies on $\partial U_\eta$ and it is strictly preperiodic, then $J(g_\eta)$ is a generalized Sierpi\'{n}ski gasket.
\end{cor}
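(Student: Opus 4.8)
The plan is to verify the four hypotheses of Theorem \ref{thm:criterion} for $f = g_\eta$, taking $U = U_\eta$ the immediate super-attracting basin of $\infty$ and letting $V$ be the Fatou component containing the pole $0$. Recall that $g_\eta$ has degree $6$, with $g_\eta^{-1}(\infty) = \{0,\infty\}$ of local degrees $4$ and $2$, and $g_\eta^{-1}(0) = \{1,b\}$ of local degrees $3$ and $3$; together with the portrait $b\mapsto 0\mapsto\infty$, $\eta\mapsto 1\mapsto 0$ and $g_\eta(\infty)=\infty$, this shows that the forward orbit of every critical point other than $c$ meets $U_\eta$. Since by hypothesis $c\in\partial U_\eta$ is simple (local degree $2$) and strictly preperiodic, hypothesis (4) reduces to checking that $c$ is the only critical point on $\partial U_\eta$; this holds because $0,1,b,\eta$ lie in the Fatou set, hence in $\EC\setminus J(g_\eta)$, while $\infty$ is interior to $U_\eta$.

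Before the remaining hypotheses I would first show $0\notin U_\eta$, equivalently $\deg(g_\eta|_{U_\eta})=2$. If instead $0\in U_\eta$, then both preimages of $\infty$ would lie in $U_\eta$, forcing $g_\eta|_{U_\eta}$ to have degree $6$; then $U_\eta$ would be completely invariant and the whole basin $A(\infty)$ of $\infty$ would equal $U_\eta$ and contain all critical points except $c$, with total ramification $3+2+2+1+1=9$, so Riemann--Hurwitz would give $\chi(U_\eta)=6\chi(U_\eta)-9$, i.e. $\chi(U_\eta)=9/5\notin\Z$, which is absurd. Hence $0\notin U_\eta$, and since $g_\eta(U_\eta)=U_\eta$ none of $1,b,\eta$ can lie in $U_\eta$ either; thus $\infty$ is the only critical point in $U_\eta$, $\deg(g_\eta|_{U_\eta})=2$, and Riemann--Hurwitz gives $\chi(U_\eta)=1$, so $U_\eta$ is simply connected. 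Hypothesis (2) now follows: every component of $g_\eta^{-1}(U_\eta)$ maps onto $U_\eta$ and hence contains a preimage of $\infty$, so $g_\eta^{-1}(U_\eta)=U_\eta\cup V$ with $V\ni 0$ distinct from $U_\eta$. Hypothesis (3) follows too: any component $W$ of $g_\eta^{-1}(V)$ maps onto $V$, hence contains a preimage of $0$, i.e. $1$ or $b$, so $\deg(g_\eta|_W)\geqslant 3>2=\deg(g_\eta|_U)$.

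The heart of the argument is hypothesis (1). I would deduce the connectedness of $J(g_\eta)$ from the fact that a compact subset of $\EC$ is connected precisely when all of its complementary components are simply connected, so it suffices to show that every Fatou component is simply connected. Since $c$ is strictly preperiodic and every other critical orbit lands on the super-attracting point $\infty$, the only Fatou cycle is $\{\infty\}$ and $F(g_\eta)=A(\infty)$. The key point is that no Fatou component contains two critical points: the only critical pairs with a common image are $\{0,\infty\}$, separated because $0\notin U_\eta$, and $\{1,b\}$, which cannot share a component $W$ since then $g_\eta|_W$ would have full degree $6$ over $V$ with ramification $4$, giving the impossible value $\chi(W)=6-4=2$. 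Granting this, one runs an induction on the level of a component in $A(\infty)$: a component carrying no critical point maps isomorphically onto its (simply connected) image, while a component carrying a single critical point of local degree $d$ has, by Riemann--Hurwitz together with $\deg(g_\eta|_W)\geqslant d$ and the bound $\chi\leqslant 1$, degree exactly $d$ and $\chi=1$; in both cases it is simply connected. I expect this last step --- certifying the separation of the critical points and carrying the Riemann--Hurwitz induction through every level of $A(\infty)$ so that no multiply connected Fatou component arises --- to be the main obstacle, with hypotheses (2)--(4) being essentially immediate from the explicit critical portrait.
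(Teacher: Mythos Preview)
Your proposal is correct and verifies the same four hypotheses of Theorem~\ref{thm:criterion}, but the logical order and the tools differ from the paper's proof in an interesting way.

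The paper first observes that under the assumption on $c$ the map $g_\eta$ is critically finite, and then invokes the general fact that critically finite rational maps have connected Julia sets (citing \cite[p.\,35]{McM88}); connectedness of $J(g_\eta)$ in hand, $U_\eta$ is simply connected, and the paper rules out $0\in U_\eta$ by noting that a completely invariant simply connected $U_\eta$ would contain exactly $5$ critical points (with multiplicity) by Riemann--Hurwitz, whereas the portrait forces at least $6$. Hypotheses (2)--(4) then follow exactly as you argue.

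You reverse the dependence: you rule out $0\in U_\eta$ \emph{first}, without knowing that $J$ is connected, via the neat observation that complete invariance would give $\chi(U_\eta)=6\chi(U_\eta)-9$, hence $\chi=9/5\notin\Z$. With $\deg(g_\eta|_{U_\eta})=2$ established, you then prove hypothesis~(1) by a self-contained Riemann--Hurwitz induction up the tree of preimage components, the key step being the separation of the same-level pair $\{1,b\}$ (your $\chi=6-4=2$ contradiction). This is entirely sound: at each level at most one critical point can lie in a given component (levels $0,1,3$ have a unique critical point, level~$2$ is handled by your argument, levels $\geqslant 4$ have none), and then the standard dichotomy (no critical point $\Rightarrow$ conformal; one critical point of local degree $d$ $\Rightarrow$ $D=d$, $\chi=1$) propagates simple connectivity.

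The trade-off: the paper's route is a one-line citation; yours is longer but elementary and avoids appealing to the black-box result on critically finite maps, which may be preferable if one wants the argument to stand on its own.
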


This paper is organized as following:
In Section \ref{sec:McM}, we give the proof of Theorem \ref{thm:McM-family} based on some dynamical properties of the McMullen map $f_\lambda$ when $\lambda$ lies on the boundary of unbounded hyperbolic component.
In Section \ref{sec:area-dim}, we study a family of quadratic rational maps containing a fixed bounded type Siegel disk and prove that there are parameters such that the corresponding Julia sets are generalized Sierpi\'{n}ski gaskets with positive area (Theorem \ref{thm:area-dim}).
In Section \ref{sec:criterion}, we prove Theorem \ref{thm:criterion} and use this criterion to obtain Corollary \ref{cor:gasket-2}.

\medskip
\noindent\textbf{Acknowledgements.}
The authors would like to thank the referee for careful reading and helpful comments.
This work was supported by NSFC (Grant Nos. 12471073, 12571093, 12222107) and NSF of Hunan Province (Grant No. 2025JJ50049).

\section{Gasket Julia sets in the McMullen family}\label{sec:McM}

In this section, we consider the special McMullen family
\begin{equation}\label{equ:McM-3}
f_\lambda(z)=z^n+\lambda/z^n, \text{\quad where }n\geqslant 3
\end{equation}
and $\lambda\in\C^*:=\C\setminus\{0\}$. The dynamical plane and parameter space of this family were deeply studied in \cite{QWY12} and \cite{QRWY15} (see also \cite{DLU05}, \cite{Ste06}). We shall use some results therein to prove Theorem \ref{thm:McM-family}.

\subsection{Polynomial-like renormalization}\label{subsec:poly-like}

Let $U$ and $V$ be two Jordan disks in $\C$ such that $U$ is compactly contained in $V$. The map $f:U\to V$ is called a \textit{polynomial-like mapping} of degree $d\geqslant 2$ if $f$ is a proper holomorphic surjection with degree $d$. The \emph{filled Julia set} $K(f)$ is defined as $K(f):=\bigcap_{n\geqslant 0}f^{-n}(V)$ and the \textit{Julia set} $J(f)$ is the topological boundary of $K(f)$.
Two polynomial-like mappings $f_1:U_1\to V_1$ and $f_2:U_2\to V_2$ are said to be \emph{hybrid equivalent} if there is a quasiconformal mapping $h$ defined from a neighborhood of $K(f_1)$ onto a neighborhood of $K(f_2)$, which conjugates $f_1$ to $f_2$ and the complex dilatation of $h$ on $K(f_1)$ is zero. The following \textit{Straightening Theorem} is fundamental in the polynomial-like renormalization theory.

\begin{thm}[{\cite[p.\,296]{DH85b}}]\label{thm:straightening}
Let $f:U\to V$ be a polynomial-like mapping of degree $d\geqslant 2$. Then $f:U\to V$ is hybrid equivalent to a polynomial $P$ with the same degree $d$. Moreover, if $K(f)$ is connected, then $P$ is uniquely determined up to a conjugation by an affine map.
\end{thm}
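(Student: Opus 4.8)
The plan is to prove existence by the standard \emph{surgery} technique and uniqueness by an invariant-dilatation argument, the Measurable Riemann Mapping Theorem of Ahlfors--Bers being the main analytic input.

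\textbf{Existence.} First I would arrange that $\partial U$ and $\partial V$ are smooth Jordan curves: replace $V$ by a slightly smaller Jordan disk $V'\Subset V$ with $U\Subset V'$ and real-analytic boundary, and replace $U$ by $f^{-1}(V')$; this leaves $K(f)$ unchanged. The restriction $f\colon\partial U\to\partial V$ is then a smooth degree-$d$ covering of circles. Next I would extend $f$ to a quasiregular branched self-cover $F$ of $\EC$ as follows: fix the model $z\mapsto z^d$ on $\{|z|\geqslant R\}$ for large $R$, and glue the fundamental annulus $A:=V\setminus\overline U$ to the round annulus $\{R\leqslant|z|\leqslant R^d\}$ by a diffeomorphism of the boundary circles compatible with the two covering maps $f\colon\partial U\to\partial V$ and $z\mapsto z^d$. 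This produces a degree-$d$ quasiregular map $F\colon\EC\to\EC$ with $F=f$ on $U$, with $F(z)=z^d$ on $\{|z|\geqslant R^d\}$, and with $F$ holomorphic everywhere except on the single annulus $A$, on which it is quasiconformal with bounded dilatation.

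\textbf{Invariant structure.} The key observation is that the filled Julia set of $F$ equals $K(f)$ and that every point outside $K(f)$ escapes to $\infty$ under $F$; consequently the forward orbit of almost every point meets the non-holomorphic annulus $A$ \emph{at most once}. I would therefore define an $F$-invariant Beltrami coefficient $\mu$ by setting $\mu=0$ on $K(f)$ and near $\infty$, and by spreading the standard structure along orbits through the single passage in $A$. The ``at most once'' property guarantees $\|\mu\|_\infty<1$, since the dilatation of the gluing is composed only with holomorphic maps, which preserve its pointwise modulus. By the Measurable Riemann Mapping Theorem there is a quasiconformal $\phi\colon\EC\to\EC$, normalized to fix $\infty$ with $\phi(z)/z\to1$ there, whose complex dilatation is $\mu$. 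Since $\mu$ is $F$-invariant, $P:=\phi\circ F\circ\phi^{-1}$ is holomorphic, hence a degree-$d$ rational map; as $F(z)=z^d$ near $\infty$ and $\mu=0$ there, $P$ fixes $\infty$ with local degree $d$, so $P$ is a polynomial of degree $d$. Finally $\phi$ conjugates $f$ to $P$ on a neighborhood of $K(f)$, and because $\mu=0$ a.e.\ on $K(f)$ the map $\phi$ is conformal there; thus $\phi$ is a hybrid equivalence. I expect the main technical obstacle to be verifying that the gluing can be carried out with bounded dilatation and that the resulting invariant $\mu$ indeed satisfies $\|\mu\|_\infty<1$, which is precisely where the ``crosses $A$ at most once'' dichotomy is used.

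\textbf{Uniqueness.} Suppose $K(f)$ is connected and $P_1,P_2$ are two polynomials of degree $d$ hybrid equivalent to $f$; then $K(P_1),K(P_2)$ are connected and $P_1,P_2$ are hybrid equivalent to each other via a quasiconformal $h$ conjugating $P_1$ to $P_2$ on a neighborhood of $K(P_1)$ with $\bar\partial h=0$ a.e.\ on $K(P_1)$. Using the Böttcher coordinates, which extend over the whole basin of $\infty$ precisely because the Julia sets are connected, I would extend $h$ to a global quasiconformal conjugacy $H$ between $P_1$ and $P_2$ that is conformal near $\infty$. The relation $H\circ P_1=P_2\circ H$ forces $|\mu_H|$ to be $P_1$-invariant, that is $|\mu_H(z)|=|\mu_H(P_1^n(z))|$ for all $n$. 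For a.e.\ $z$ in the basin of $\infty$ one has $P_1^n(z)\to\infty$, where $\mu_H=0$, so $\mu_H=0$ a.e.\ on that basin; together with $\mu_H=0$ a.e.\ on $K(P_1)$, inherited from $h$, this gives $\mu_H\equiv0$, so $H$ is conformal and hence affine. Therefore $P_2=H\circ P_1\circ H^{-1}$ with $H$ affine, which is the asserted uniqueness.
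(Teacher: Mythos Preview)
The paper does not give its own proof of this statement: Theorem~\ref{thm:straightening} is quoted from Douady--Hubbard \cite{DH85b} and used as a black box, so there is nothing in the paper to compare your proposal against. Your argument is correct and is precisely the original Douady--Hubbard proof---surgery to a quasiregular extension that equals $z^d$ near $\infty$, an invariant Beltrami field built from the fact that escaping orbits cross the gluing annulus at most once, straightening via the Measurable Riemann Mapping Theorem, and uniqueness by extending the hybrid conjugacy through B\"ottcher coordinates and showing the resulting global conjugacy has vanishing dilatation.
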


In this paper we are only interested in \textit{quadratic-like mappings}, i.e., the polynomial-like mappings with degree $d=2$. By Theorem \ref{thm:straightening}, every quadratic-like mapping $f:U\to V$ is hybrid equivalent to a quadratic polynomial
\begin{equation}
Q_c(z)=z^2+c \text{\quad where\quad}c\in\C.
\end{equation}
If the unique critical orbit of $f$ is contained in $U$, then $Q_c$ is unique. We use $\beta_c$ to denote the \textit{$\beta$-fixed point} (i.e., the landing point of the external ray with angle zero) of $Q_c$ and $\beta_c'$ the other preimage of $\beta_c$.

If there exist $p\geqslant 1$ and two Jordan domains $U$, $V$ such that $f^{\circ p}:U\to V$ is a quadratic-like mapping with connected Julia set\footnote{If $f$ is a rational map, it is required that the domain $U$ does not contain the whole Julia set of $f$.}, then $f$ is called \textit{$p$-renormalizable} (or \textit{renormalizable} in short).
The sets $K(f)$, $f(K(f))$, $\cdots$, $f^{\circ (p-1)}(K(f))$ are called \textit{small filled Julia sets}, and $J(f)$, $f(J(f))$, $\cdots$, $f^{\circ (p-1)}(J(f))$ are called \textit{small Julia sets}.
The $\beta$-fixed point $\beta(f^{\circ p})$ of $f^{\circ p}:U\to V$ is defined as $h^{-1}(\beta_c)$ and the other preimage of $\beta(f^{\circ p})$ is defined as $\beta'(f^{\circ p}):=h^{-1}(\beta_c')$, where $c\in\C$ is uniquely determined such that $f^{\circ p}$ is hybrid conjugate to $Q_c$ by a quasiconformal mapping $h$.
For more backgrounds and results on the polynomial-like renormlization, we refer to \cite[Section 5]{McM94b}.

\subsection{Dynamics of McMullen maps}

For the family $f_\lambda$ in \eqref{equ:McM-3}, the critical points of $f_\lambda$ are $\{0,\infty\}\cup \Crit_\lambda$, where $\Crit_\lambda:=\{c_j=\lambda^{\frac{1}{2n}}e^{\frac{\pi\ii j}{n}}:0\leqslant j\leqslant 2n-1\}$ is the set of free critical points. Besides $\infty$, the rest critical values of $f_\lambda$ are $\pm 2\sqrt{\lambda}$.
By the dynamical symmetry (see \cite{DLU05}), $c_j$ lies in the attracting basin of $\infty$ if and only if $c_\ell$ does for $0\leqslant j,\ell\leqslant 2n-1$. Define the \textit{non-escape locus}
\begin{equation}
\mathcal{N}:=\big\{\lambda\in\C^*:f_\lambda^{\circ k}(c_0)\not\to\infty \text{ as }k\to\infty\big\}.
\end{equation}
Every component of $\C^*\setminus \mathcal{N}$ is a hyperbolic component and the unique unbounded hyperbolic component $\MH_0$ is called the \textit{Cantor locus} since $J(f_\lambda)$ is a Cantor set if and only if $\lambda\in\MH_0$. See the left picture in Figure \ref{Fig:Julia-gasket-McM}.

\begin{figure}[!htpb]
  \setlength{\unitlength}{1mm}
  \centering
  \includegraphics[width=0.45\textwidth]{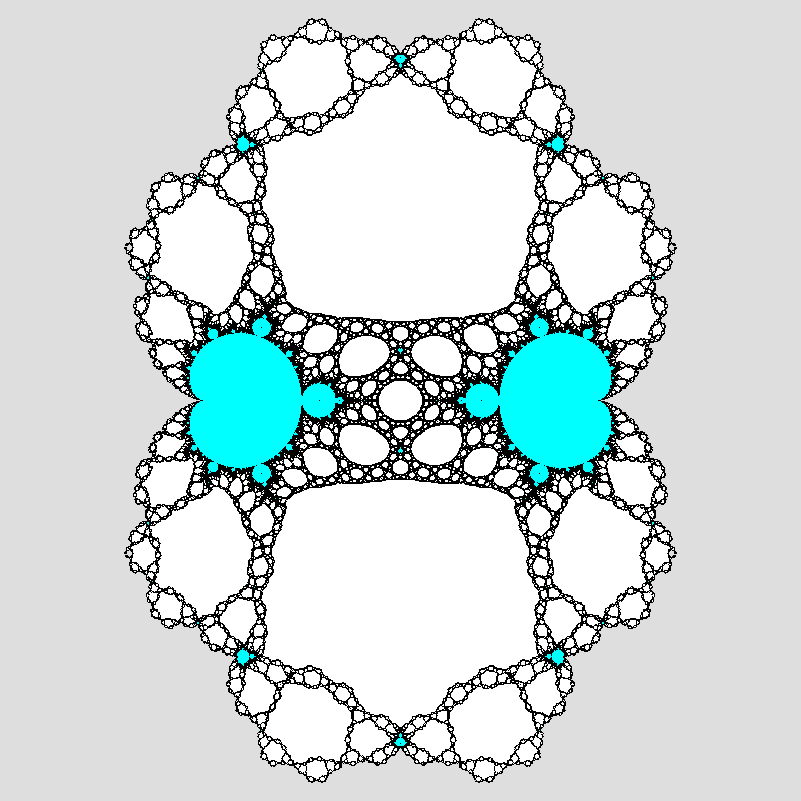}\quad
  \includegraphics[width=0.45\textwidth]{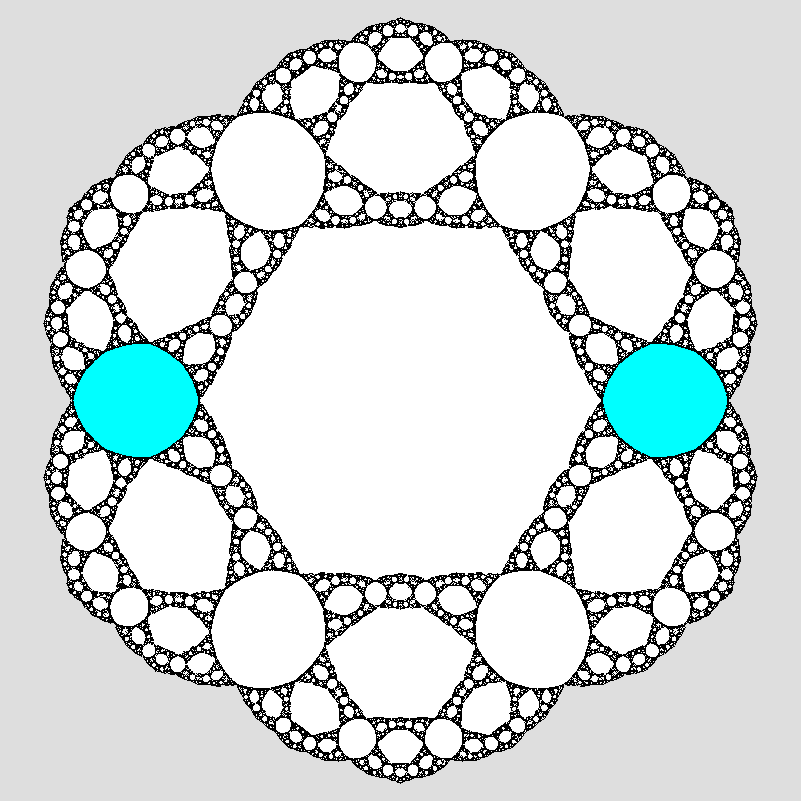}
  \put(-12,57){$B_\lambda$}
  \put(-34,31){$T_\lambda$}
  \put(-82,57){$\MH_0$}
  \caption{The parameter plane of $f_\lambda(z)=z^3+\lambda/z^3$ (the unbounded hyperbolic component $\MH_0$ is colored gray) and a corresponding generalized Sierpi\'{n}ski gasket Julia set, where $\lambda_0=1/8$ such that $f_{\lambda_0}$ is $1$-renormalizable and has two super-attracting fixed points $\pm 1/\sqrt{2}$ (whose immediate basins are colored cyan).}
  \label{Fig:Julia-gasket-McM}
\end{figure}

For the McMullen family $f_\lambda$, we consider the following renormalizations. If there exist a symbol $\epsilon\in\{\pm 1\}$, an integer $p\geqslant 1$, a critical point $c_j\in\Crit_\lambda$ and two Jordan domains $U$, $V$ containing $c_j$ such that $\epsilon f_\lambda^{\circ p}:U\to V$ is a quadratic-like mapping with connected Julia set, then $f_\lambda$ is called \textit{$p$-renormalizable} if $\epsilon=1$ and \textit{$p$-$*$-renormalizable} if $\epsilon=-1$ (or respectively, \textit{renormalizable} and \textit{$*$-renormalizable} in short). See \cite[Section 5]{QWY12} for more details.
Let $B_\lambda$ be the immediate super-attracting basin of $\infty$.
We need the following results from \cite[Theorems 1.1, 1.3 and Proposition 7.6]{QWY12} and \cite[Theorem 6.5]{QRWY15}.

\begin{thm}\label{thm:QRWY}
The following statements hold for the family $f_\lambda$ defined in \eqref{equ:McM-3}:
\begin{enumerate}
\item if $\lambda\not\in\MH_0$, then $\partial B_\lambda$ is a Jordan curve;
\item $\lambda\in\partial\MH_0$ if and only if $\partial B_\lambda$ contains either the critical set $\Crit_\lambda$ or a parabolic cycle of $f_\lambda$;
\item if $f_\lambda$ has no Siegel disk and $J(f_\lambda)$ is connected, then every Fatou component of $f_\lambda$ is a Jordan domain, and if further $f_\lambda$ is neither renormalizable nor $*$-renormalizable, then $J(f_\lambda)$ is locally connected.
\end{enumerate}
\end{thm}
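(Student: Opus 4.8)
All three statements concern a single map $f_\lambda$, so the plan is to first pin down the geometry of the super-attracting basin $B_\lambda$ of $\infty$ and then transport that information to the whole Julia set using the order-$n$ rotational symmetry $R(z)=e^{2\pi\ii/n}z$, which satisfies $f_\lambda\circ R=f_\lambda$ and hence preserves $J(f_\lambda)$ and permutes the Fatou components. Two further structures organize everything: the trap door $T_\lambda$ (the component of $f_\lambda^{-1}(B_\lambda)$ containing $0$) and the involution $z\mapsto\lambda^{1/n}/z$ exchanging $0$ and $\infty$. I treat the three items in turn, and I expect the local-connectivity estimate in (3) to be the genuine obstacle.

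\textbf{Statement (1).} When $\lambda\notin\MH_0$ the free critical orbit does not escape, so the only critical point in $B_\lambda$ is $\infty$, and the Böttcher coordinate therefore extends to a conformal isomorphism $\phi_\lambda\colon B_\lambda\to\EC\setminus\overline{\D}$ conjugating $f_\lambda$ to $w\mapsto w^n$. The plan is to prove that $\partial B_\lambda$ is locally connected by building a graph from finitely many landing external rays $\phi_\lambda^{-1}(\{re^{2\pi\ii\theta}:r>1\})$ together with equipotentials, and showing the resulting pieces shrink. Away from the critical orbit this is hyperbolic expansion; for the geometrically finite boundary parameters (where $\Crit_\lambda\subset\partial B_\lambda$, still allowed since they lie in $\C^*\setminus\MH_0$) the same conclusion follows because the boundary then carries only preperiodic or parabolic critical data, which is covered by the standard geometrically finite puzzle argument. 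Finally, to promote ``locally connected continuum'' to ``Jordan curve'' I would analyse the ray-landing combinatorics: any identification of rays must be $R$-equivariant, and for $n\geqslant 3$ this rules out pinching, so $\phi_\lambda^{-1}$ extends injectively to $\partial\D$.

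\textbf{Statement (2).} Here I would parametrize $\MH_0$ by the Böttcher position of the free critical value, i.e.\ by $\lambda\mapsto\phi_\lambda(2\sqrt\lambda)$ (well defined since the critical value lies in $B_\lambda$ precisely on $\MH_0$), which after quotienting by the symmetries gives a conformal model of $\MH_0$. Thus $\lambda\in\partial\MH_0$ exactly when the free critical value degenerates toward $\partial B_\lambda$, and classifying this degeneration by quasiconformal surgery and parabolic-implosion theory yields the two alternatives: either the free critical orbit lands on $\partial B_\lambda$ in the limit, so $\Crit_\lambda\subset\partial B_\lambda$, or a cycle acquires a root-of-unity multiplier and a parabolic cycle appears on $\partial B_\lambda$. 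For the converse I would check directly that each configuration makes the free critical orbit non-escaping (so $\lambda\notin\MH_0$) while realizing $\lambda$ as a limit of escaping parameters (a small perturbation pushes the critical point into $B_\lambda$), whence $\lambda\in\overline{\MH_0}\setminus\MH_0=\partial\MH_0$.

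\textbf{Statement (3) and the main obstacle.} A connected Julia set forces $\lambda\notin\MH_0$, so statement (1) already gives that $\partial B_\lambda$ is a Jordan curve; pulling back by $f_\lambda$, whose ramification off $B_\lambda$ is accounted for by $0$ and $\infty$, shows $T_\lambda$ and all iterated preimages of $B_\lambda$ are Jordan domains, and in the absence of Siegel disks the remaining components lie in attracting or parabolic basins whose boundaries are handled the same way, so every Fatou component is a Jordan domain. To upgrade this to local connectivity of $J(f_\lambda)$ I would run the Yoccoz puzzle generated by the rays and equipotentials of $B_\lambda$, organized by $R$, and prove the puzzle pieces shrink to points: away from the free critical orbit this is expansion, while at the critical point the decisive input is exactly non-renormalizability and non-$*$-renormalizability, which through the principal-nest/tableau machinery forces the nested critical pieces to shrink. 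This shrinking estimate at the critical point is the technical heart of the whole theorem and the step I expect to be hardest, since renormalizability is the true obstruction: in that case the critical nest converges to a small filled Julia set (whose straightening is a quadratic polynomial by Theorem \ref{thm:straightening}) rather than to a point, which is precisely why those cases are excluded from the hypothesis.
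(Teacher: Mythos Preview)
The paper does not prove this theorem at all: it is stated as a quotation of results from \cite{QWY12} (Theorems 1.1, 1.3 and Proposition 7.6) and \cite{QRWY15} (Theorem 6.5), with no argument supplied. So there is no ``paper's own proof'' to compare against; the authors treat Theorem~\ref{thm:QRWY} as an input, and all the work of the present paper happens downstream of it (Corollary~\ref{cor:mcm-lc}, Lemmas~\ref{lem:no-intersect} and~\ref{lem:crit-on-B}, and the proof of Theorem~\ref{thm:McM-family}).

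Your outline is, in broad strokes, a faithful sketch of how the cited papers actually proceed: B\"ottcher coordinates and a Yoccoz puzzle built from external rays and equipotentials for (1), a parametrization of $\MH_0$ via the position of the free critical value for (2), and the principal-nest/tableau shrinking argument under non-(\mbox{$*$-})renormalizability for (3). You are also right that the critical shrinking estimate is the hard technical core. Two cautions if you intend to flesh this out. First, in (1) the case $\lambda\in\partial\MH_0$ with $\Crit_\lambda\subset\partial B_\lambda$ is \emph{not} automatically geometrically finite (the free critical orbit on $\partial B_\lambda$ may be infinite and non-preperiodic), so the puzzle argument there cannot be dismissed as ``standard geometrically finite''; this is exactly where \cite{QWY12} does real work. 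Second, in (3) the Jordan property for Fatou components other than preimages of $B_\lambda$ (namely attracting or parabolic basins when $f_\lambda$ is renormalizable) is not immediate from pulling back $\partial B_\lambda$; one needs the puzzle to control those boundaries too, which is the content of \cite[Proposition 7.6]{QWY12}.
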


As a consequence of Theorem \ref{thm:QRWY}, we obtain the following result.

\begin{cor}\label{cor:mcm-lc}
If $\lambda\in\partial\mathcal{H}_0$, then $J(f_\lambda)$ is connected, locally connected and every Fatou component of $f_\lambda$ is a Jordan domain.
\end{cor}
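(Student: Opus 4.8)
The plan is to deduce all three assertions from Theorem~\ref{thm:QRWY} by verifying the hypotheses of its part~(3). Since $\MH_0$ is open and $\lambda\in\partial\MH_0$, we have $\lambda\notin\MH_0$, so $J(f_\lambda)$ is not a Cantor set; by the connectivity statement for $\partial\MH_0$ recalled in the introduction (the result of \cite{DR13}), $J(f_\lambda)$ is connected. Part~(1) of Theorem~\ref{thm:QRWY} then gives that $\partial B_\lambda$ is a Jordan curve, and part~(2) provides the dichotomy that either (A) $\Crit_\lambda\subset\partial B_\lambda$, or (B) $f_\lambda$ has a parabolic cycle lying on $\partial B_\lambda$. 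It remains to exclude Siegel disks and to show that $f_\lambda$ is neither renormalizable nor $*$-renormalizable; part~(3) then yields the remaining conclusions.

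To rule out Siegel disks I would use that the boundary of any Siegel disk is contained in the closure $\overline{P}$ of the forward orbit of the critical set, and then locate $\overline{P}$. The critical points $0$ and $\infty$ have forward orbit $\{\infty\}\subset B_\lambda$, while by the dynamical symmetry all free critical points behave alike. In case~(B) their orbits lie in the Fatou set and converge to the parabolic cycle, so $\overline{P}\cap J(f_\lambda)$ is exactly that finite cycle; since the boundary of a Siegel disk is an infinite continuum, no Siegel disk can exist. In case~(A) the forward orbits of the free critical points stay on the forward-invariant Jordan curve $\partial B_\lambda$, so $\overline{P}\cap J(f_\lambda)\subset\partial B_\lambda$; a Siegel disk $\Delta\neq B_\lambda$ would then satisfy $\partial\Delta\subset\partial B_\lambda$, which (as $\Delta$ is open, connected and disjoint from $B_\lambda$) forces $\Delta=\EC\setminus\overline{B_\lambda}$ and hence $J(f_\lambda)=\partial B_\lambda$. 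This is impossible, because the trap door $T_\lambda\ni 0$ is a third Fatou component---distinct from $B_\lambda$ since $J(f_\lambda)$ is connected, and distinct from a rotation domain since it is eventually mapped into $B_\lambda$---whereas a Jordan curve has only two complementary components.

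For the non-renormalizability, which I expect to be the main obstacle, I would first observe that any quadratic-like restriction $\epsilon f_\lambda^{\circ p}:U\to V$ at a free critical point $c_j$ with connected Julia set has bounded range $V$, so every point of $B_\lambda\cap U$ has $|\epsilon f_\lambda^{\circ p}|$-orbit tending to $\infty$ (using that $B_\lambda$ is invariant under $z\mapsto -z$) and therefore escapes $V$; hence the small filled Julia set $K\ni c_j$ is disjoint from $B_\lambda$. In case~(A), $c_j\in\partial B_\lambda$ and its entire forward orbit stays on $\partial B_\lambda$, on which $f_\lambda$ acts as an expanding degree-$n$ covering; this local injectivity along $\partial B_\lambda$ is incompatible with the degree-two folding of a quadratic-like map at $c_j$, unless $K$ were to contain the whole invariant curve $\partial B_\lambda$, which is excluded by the requirement that $U$ not contain all of $J(f_\lambda)$. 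In case~(B) the would-be renormalization would have the parabolic cycle on $\partial K$, but that cycle also lies on $\partial B_\lambda$ and is a global feature tied to the basin of $\infty$, so it cannot be captured by a self-contained quadratic-like map disjoint from $B_\lambda$. Making these two local-to-global compatibility statements rigorous---most cleanly via the puzzle and combinatorial machinery of \cite{QWY12}---is the delicate part. Once $f_\lambda$ is known to be neither renormalizable nor $*$-renormalizable, Theorem~\ref{thm:QRWY}(3) shows that every Fatou component is a Jordan domain and that $J(f_\lambda)$ is locally connected, completing the proof.
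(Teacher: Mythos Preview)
Your overall strategy---verify the hypotheses of Theorem~\ref{thm:QRWY}(3)---is the paper's strategy as well, but there is a genuine error in how you treat the parabolic case~(B).

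You attempt to prove that $f_\lambda$ is neither renormalizable nor $*$-renormalizable in \emph{both} cases, and then invoke the second clause of Theorem~\ref{thm:QRWY}(3) for local connectivity. In case~(B) this is simply false: when $\partial B_\lambda$ carries a parabolic cycle, $f_\lambda$ \emph{is} renormalizable (or $*$-renormalizable), hybrid equivalent to $Q_{1/4}(z)=z^2+\tfrac14$; see Lemma~\ref{lem:copy}. Your heuristic that ``the parabolic cycle is a global feature tied to the basin of $\infty$ and so cannot be captured by a self-contained quadratic-like map disjoint from $B_\lambda$'' is exactly what fails: the small filled Julia set is a cauliflower touching $\partial B_\lambda$ at a single point (its $\beta$-fixed point) and $\partial T_\lambda$ at another, with the quadratic-like domain lying in the complement of $\overline{B}_\lambda$. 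Consequently your route to local connectivity cannot work in case~(B). The paper instead observes that in case~(B) the map is geometrically finite, and obtains local connectivity directly from \cite{TY96}; non-renormalizability is not needed (and not true).

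Two smaller remarks. First, note that the Jordan-domain conclusion in Theorem~\ref{thm:QRWY}(3) requires only ``no Siegel disk'' and ``$J(f_\lambda)$ connected''; the non-renormalizability hypothesis is only for the local connectivity clause. So once you have excluded Siegel disks (your argument for this is fine, and close to the paper's), every Fatou component is already a Jordan domain in both cases. Second, in case~(A) the paper does not attempt an ad hoc argument for non-renormalizability either; it quotes \cite[Proposition~4.1 and Lemma~7.1]{QWY12} (after first noting $\lambda\notin\R$ and disposing of the critically finite subcase). Your sketch for case~(A) is in the right spirit but, as you acknowledge, would ultimately rely on that same machinery.
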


\begin{proof}
Let $\lambda\in\partial\MH_0$ be given.
Note that $\partial\MH_0$ is contained in the non-escape locus $\mathcal{N}$. By \cite{DR13}, $J(f_\lambda)$ is connected.
By Theorem \ref{thm:QRWY}(1) and (2), $f_\lambda$ has no Siegel disk since each point on the boundary of a Siegel disk is accumulated by the forward orbits of critical points (see \cite[Theorem 11.17, p.\,138]{Mil06}) but the forward orbit of the critical set $\Crit_\lambda$ is contained in $\partial B_\lambda$. Hence by Theorem \ref{thm:QRWY}(3), every Fatou component of $f_\lambda$ is a Jordan domain.

If $\partial B_\lambda$ contains a parabolic cycle of $f_\lambda$, then $f_\lambda$ is geometrically finite and $J(f_\lambda)$ is locally connected by \cite{TY96}.
Suppose $\partial B_\lambda$ contains the critical set $\Crit_\lambda$. Then $\lambda\not\in\R$ since otherwise $f_\lambda$ would have a fixed critical point $c_0$ and this is a contradiction. If $f_\lambda$ is critically finite, then $J(f_\lambda)$ is locally connected. Suppose $f_\lambda$ is critically infinite. By \cite[Proposition 4.1 and Lemma 7.1]{QWY12}, $f_\lambda$ is neither renormalizable nor $*$-renormalizable. According to Theorem \ref{thm:QRWY}(3), $J(f_\lambda)$ is locally connected.
\end{proof}

\subsection{Proof of Theorem \ref{thm:McM-family}}

Recall that $B_\lambda$ is the immediate super-attracting basin of $\infty$. Let $T_\lambda$ be the Fatou component of $f_\lambda$ containing $0$.

\begin{lem}[{\cite[Section 7]{Ste06}, \cite[Proposition 7.5]{QWY12} and see Figure \ref{Fig:Julia-gasket-McM}}]\label{lem:copy}
If $\partial B_\lambda$ contains a parabolic cycle, then
\begin{enumerate}
\item there exists an integer $p\geqslant 1$ such that $f_\lambda$ is either $p$-renormalizable or $p$-$*$-renormalizable and hybird equivalent to $Q_{1/4}(z)=z^2+1/4$ by a quasiconformal mapping $h$;
\item $K(\epsilon f_\lambda^{\circ p})\cap\partial B_\lambda=\{h^{-1}(\frac{1}{2})\}$ and $K(\epsilon f_\lambda^{\circ p})\cap\partial T_\lambda=\{h^{-1}(-\frac{1}{2})\}$, where $\epsilon\in\{\pm 1\}$ corresponds to renormalizable and $*$-renormalizable respectively.
\end{enumerate}
\end{lem}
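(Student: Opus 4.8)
The plan is to obtain part~(1) from the renormalization picture already available for the family $f_\lambda$ and to pin down the straightened polynomial by a multiplier computation, and then to derive part~(2) by transporting the $\beta$-fixed point and its companion preimage through the straightening map and determining where the small filled Julia set can meet the two Jordan curves $\partial B_\lambda$ and $\partial T_\lambda$.

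For part~(1), since $\partial B_\lambda$ carries a parabolic cycle, I would quote the renormalization structure of \cite[Section~5 and Proposition~7.5]{QWY12}: there exist $p\geqslant 1$, a sign $\epsilon\in\{\pm1\}$, a free critical point $c_j\in\Crit_\lambda$ and Jordan domains $U\ni c_j$, $V$ such that $\epsilon f_\lambda^{\circ p}:U\to V$ is quadratic-like with connected filled Julia set, and the parabolic cycle sits inside $K(\epsilon f_\lambda^{\circ p})$ as a fixed point of $\epsilon f_\lambda^{\circ p}$ of multiplier $1$. By the Straightening Theorem (Theorem~\ref{thm:straightening}) this map is hybrid equivalent, through a quasiconformal $h$, to a unique $Q_c$. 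Since hybrid equivalence is conformal on the filled Julia set, it preserves the multiplier of the parabolic fixed point, so $Q_c$ has a fixed point of multiplier $1$; solving $Q_c'(z)=2z=1$ together with $Q_c(z)=z$ forces $z=\tfrac12$ and $c=\tfrac14$. Hence $c=\tfrac14$ and (1) follows.

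For part~(2), I first record the elementary facts about $Q_{1/4}$ that are used: its parabolic fixed point is the $\beta$-fixed point $\beta_{1/4}=\tfrac12$, and its other preimage is $\beta_{1/4}'=-\tfrac12$, with $Q_{1/4}(-\tfrac12)=\tfrac12$. By the definitions of $\beta$ and $\beta'$, the point $h^{-1}(\tfrac12)=\beta(\epsilon f_\lambda^{\circ p})$ is the parabolic point lying on $\partial B_\lambda$, while $h^{-1}(-\tfrac12)=\beta'(\epsilon f_\lambda^{\circ p})$ is its $\epsilon f_\lambda^{\circ p}$-preimage inside $K:=K(\epsilon f_\lambda^{\circ p})$. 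The content is that $K$ meets each of the Jordan curves $\partial B_\lambda$ and $\partial T_\lambda$ in a single point. By Corollary~\ref{cor:mcm-lc} the relevant Fatou components are Jordan domains and, since $J(f_\lambda)$ is locally connected, all their rays land, so the argument can be made combinatorial. On $\partial B_\lambda$ the action of $f_\lambda$ is conjugate to $z\mapsto z^n$, so the parabolic point $h^{-1}(\tfrac12)$ is the landing point of finitely many rays of $B_\lambda$ whose angles are periodic under multiplication by $n^p$; these rays cut off a closed sector that contains $K$ and meets $\partial B_\lambda$ only at $h^{-1}(\tfrac12)$, giving $K\cap\partial B_\lambda=\{h^{-1}(\tfrac12)\}$. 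For $\partial T_\lambda$ I would use the pole of order $n$ at $0$, so that $f_\lambda:T_\lambda\to B_\lambda$ is proper of degree $n$ and $\partial T_\lambda\to\partial B_\lambda$ is a degree-$n$ covering, together with the deck symmetries $\rho(z)=e^{2\pi\ii/n}z$ and $\iota(z)=\lambda^{1/n}/z$ satisfying $f_\lambda\circ\rho=f_\lambda$ and $f_\lambda\circ\iota=f_\lambda$ (see \cite{DLU05}), which both fix $B_\lambda$ and $T_\lambda$ and permute the small Julia sets; after normalizing the renormalization into a standard position, these let me transport the sector picture from $\partial B_\lambda$ to $\partial T_\lambda$ and conclude that the unique contact of $K$ with $\partial T_\lambda$ is $h^{-1}(-\tfrac12)$, the $\epsilon f_\lambda^{\circ p}$-preimage of $h^{-1}(\tfrac12)$ lying on $\partial T_\lambda$. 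This matches the configuration in Figure~\ref{Fig:Julia-gasket-McM}.

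The step I expect to be the main obstacle is the single-point assertion in part~(2): ruling out that $K$ touches $\partial B_\lambda$ (or $\partial T_\lambda$) anywhere other than $h^{-1}(\pm\tfrac12)$. This is exactly where local connectivity and the Jordan property of the Fatou components (Corollary~\ref{cor:mcm-lc}) are indispensable, since they make the landing rays available and let the pair of rays landing at the parabolic point separate $K\setminus\{\beta\}$ from the rest of $\partial B_\lambda$. The bookkeeping that distinguishes the renormalizable case ($\epsilon=1$) from the $*$-renormalizable case ($\epsilon=-1$), together with the transfer of the picture from $B_\lambda$ to $T_\lambda$ through the pole at $0$, are the places that require the most care, and here I would rely on the detailed combinatorial analysis of \cite{QWY12} and \cite{Ste06}.
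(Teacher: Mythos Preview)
The paper does not give its own proof of this lemma: it is stated as a citation of \cite[Section~7]{Ste06} and \cite[Proposition~7.5]{QWY12}, with no accompanying proof environment. So there is no argument in the paper to compare yours against; the authors simply import the result.

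On the merits of your sketch: for part~(1) your line is the natural one, but the step ``the parabolic cycle sits inside $K(\epsilon f_\lambda^{\circ p})$ as a fixed point of multiplier $1$'' is exactly the nontrivial input, and it does not follow from the mere existence of a quadratic-like restriction. A parabolic fixed point of a quadratic-like map could a priori have any root-of-unity multiplier, which would straighten to a root of some other limb of $M$, not to $Q_{1/4}$. The reason one gets $c=1/4$ is the specific way the small Mandelbrot copies are attached along $\partial\mathcal{H}_0$ (their root is the cusp), and that is precisely what the cited references establish; your argument would need to quote that structural fact rather than treat it as a multiplier computation.

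For part~(2) there are two issues. First, your involution $\iota(z)=\lambda^{1/n}/z$ does satisfy $f_\lambda\circ\iota=f_\lambda$, but it \emph{swaps} $B_\lambda$ and $T_\lambda$ (since $\iota(\infty)=0$), it does not fix them as you wrote; this may be what you actually want in order to transport the sector picture, but as stated it is incorrect, and you would still need to know that $\iota$ carries $K$ to itself rather than to some other small filled Julia set. Second, the assertion that $h^{-1}(-\tfrac12)\in\partial T_\lambda$ is not justified: $\beta'$ is the $\epsilon f_\lambda^{\circ p}$-preimage of $\beta$ inside $K$, so it lies in $f_\lambda^{-p}(\partial B_\lambda)$, which for $p\geqslant 2$ has many components besides $\partial T_\lambda$. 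Pinning $\beta'$ down to $\partial T_\lambda$ (and excluding any other contact of $K$ with $\partial T_\lambda$) is again part of the Yoccoz-puzzle/cut-ray combinatorics in \cite{QWY12} that the paper is citing rather than reproving.
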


If $f_\lambda$ contains a parabolic cycle, then every periodic Fatou component can only be super-attracting or parabolic, and $B_\lambda$ is the unique immediate super-attracting basin.

\begin{lem}\label{lem:no-intersect}
Suppose $\partial B_\lambda$ contains a parabolic cycle. Let $\MA$ and $\MP$ be the collection of components of the super-attracting basin and all parabolic basins of $f_\lambda$ respectively. Then
\begin{enumerate}
\item $\partial U\cap\partial V=\emptyset$ for any different components $U,V\in\MA$;
\item $\partial U\cap\partial V=\emptyset$ for any different components $U,V\in\MP$;
\item $\partial U$ and $\partial V$ share at most one point for any $U\in\MA$ and $V\in\MP$.
\end{enumerate}
\end{lem}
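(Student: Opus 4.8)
The plan is to extract one local principle and then propagate it through the dynamics. The starting observation is that, in the parabolic case at hand, no critical point of $f_\lambda$ lies on $J(f_\lambda)$: the critical points $0$ and $\infty$ sit in the interiors of $T_\lambda$ and $B_\lambda$, while by Lemma \ref{lem:copy} the free critical set $\Crit_\lambda$ is carried into the interiors of the immediate parabolic basins (the critical point of $Q_{1/4}$ lies in the interior of its filled Julia set). Combined with Corollary \ref{cor:mcm-lc}, which makes every Fatou component a Jordan domain, this yields the following \emph{propagation principle}: if $U\neq V$ are Fatou components and $x\in\partial U\cap\partial V$, then $x$ is not critical, so $f_\lambda$ is a homeomorphism near $x$; it carries the two distinct local pieces cut out by $U$ and $V$ at $x$ to two distinct local pieces at $f_\lambda(x)$, and since a Jordan domain occupies a single such piece at each of its boundary points we must have $f_\lambda(U)\neq f_\lambda(V)$, with $f_\lambda(x)\in\partial f_\lambda(U)\cap\partial f_\lambda(V)$. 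Iterating, $f_\lambda^{\circ k}(U)\neq f_\lambda^{\circ k}(V)$ and $f_\lambda^{\circ k}(x)\in\partial f_\lambda^{\circ k}(U)\cap\partial f_\lambda^{\circ k}(V)$ for all $k\geqslant 0$.

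Part (1) then follows immediately: two distinct components of $\MA$ both eventually map onto the \emph{unique} immediate basin $B_\lambda$, so $f_\lambda^{\circ k}(U)=B_\lambda=f_\lambda^{\circ k}(V)$ for large $k$, contradicting the principle unless $\partial U\cap\partial V=\emptyset$.

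For part (3) I would first settle the base configurations, where $U\in\{B_\lambda,T_\lambda\}$ and $V$ is an immediate parabolic basin $V_i$ (attached to $\partial B_\lambda$ at the parabolic point $z_i$, with $f_\lambda(V_i)=V_{i+1}$ and $f_\lambda(z_i)=z_{i+1}$). By Lemma \ref{lem:copy}(2) (applied around the cycle) the small filled Julia set $\overline{V_i}$ meets $\partial B_\lambda$ only at $z_i$, so $\overline{B_\lambda}\cap\overline{V_i}=\{z_i\}$; and since $f_\lambda(\overline{T_\lambda}\cap\overline{V_i})\subseteq\overline{B_\lambda}\cap\overline{V_{i+1}}=\{z_{i+1}\}$, while $z_{i+1}$ has at most two preimages on $\partial V_i$ (as $f_\lambda|_{V_i}$ has degree at most $2$), one of which is $z_i\in\partial B_\lambda$ and hence off $\partial T_\lambda$ by part (1), the set $\overline{T_\lambda}\cap\overline{V_i}$ has at most one point. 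For arbitrary $U\in\MA$, $V\in\MP$, suppose $x\neq y$ both lie in $\partial U\cap\partial V$. Every super-attracting component other than $B_\lambda,T_\lambda$, and every parabolic component other than the immediate basins, is mapped by $f_\lambda$ with degree one, hence homeomorphically onto its image, so $f_\lambda$ cannot identify two distinct points of such a boundary. Pushing $x,y$ forward, one reaches a base configuration in any case, and their orbits can merge only at a step both of whose source components carry a critical point, i.e. again a base configuration. Either way one produces two distinct common boundary points in a base configuration, contradicting the preceding computation; therefore $\partial U\cap\partial V$ has at most one point.

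Part (2) reduces, via the principle, to the disjointness of the closures of two \emph{distinct immediate} parabolic basins: if $U,V\in\MP$ shared a boundary point, iterating would place a common boundary point on two distinct immediate basins $V_a\neq V_b$ (which the principle keeps distinct, eventually cycling with a fixed offset). I expect this disjointness to be the main obstacle. The key claim is that the renormalization supplied by Lemma \ref{lem:copy} is \emph{primitive}: part (2) of that lemma attaches each small filled Julia set to the rest of $J(f_\lambda)$ at its $\beta$-fixed point $h^{-1}(\tfrac12)=z_i$, the parabolic point, rather than at an interior repelling fixed point, which is the combinatorial signature of a primitive (as opposed to satellite) renormalization. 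Granting this, standard renormalization theory gives that the distinct small filled Julia sets $\overline{V_a}$ and $\overline{V_b}$ are disjoint; feeding $\overline{V_a}\cap\overline{V_b}=\emptyset$ back through the propagation principle yields $\partial U\cap\partial V=\emptyset$ for all distinct $U,V\in\MP$, which is part (2).
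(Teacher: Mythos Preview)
Your propagation principle is sound and yields a clean proof of~(1); the route for~(3) is also workable, though you should say explicitly that by the $2n$-fold symmetry of $f_\lambda$ (which preserves $B_\lambda$ and $T_\lambda$), Lemma~\ref{lem:copy}(2) transfers from $K(\epsilon f_\lambda^{\circ p})$ to every $\MP_j=e^{\pi\ii j/n}\MP_0$, not just to the one small filled Julia set containing $c_0$.

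The genuine gap is in~(2). You correctly reduce to the disjointness of distinct \emph{immediate} parabolic basins, but then appeal to ``primitivity'' and ``standard renormalization theory''. This does not close the argument. First, the feature you read off Lemma~\ref{lem:copy}(2)---that $K$ touches $\partial B_\lambda$ at its $\beta$-fixed point---describes how $K$ sits against the super-attracting basin, not how two small filled Julia sets sit against \emph{each other}; it does not by itself exclude a satellite or crossed picture. Second, the disjointness theorem you have in mind is proved for quadratic polynomials; transporting it to this rational, symmetric setting requires a separate argument, and here the $\alpha$- and $\beta$-fixed points of $Q_{1/4}$ coincide at the parabolic point, so the primitive/satellite dichotomy is degenerate anyway. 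Third, even granting all that, you must still treat pairs of immediate basins lying in \emph{different} $f_\lambda$-cycles (for odd $n$ the two cycles are exchanged by $z\mapsto -z$), which the renormalization of a single quadratic-like map does not address.

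The paper bypasses all of this with a different mechanism: it invokes from \cite{QWY12} that $K(\epsilon f_\lambda^{\circ p})$ is the nested intersection of Yoccoz puzzle pieces whose boundaries consist of equipotentials and cut rays, hence lie in $\MA\cup J(f_\lambda)$. Any parabolic component $U\neq\MP_0$ is separated from $\MP_0$ by one of these puzzle boundaries, which contains no parabolic basin points, so $\partial U\cap\partial\MP_0=\emptyset$ directly. Symmetry handles the other $\MP_j$, and since no $\partial\MP_j$ carries a critical point, the conclusion propagates by pullback to all of $\MP$. The missing idea in your~(2) is precisely this puzzle separation; without it the disjointness of the immediate basins remains unproved.
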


\begin{proof}
(1) Note that $f_\lambda(B_\lambda)=B_\lambda$, $f_\lambda^{-1}(B_\lambda)=B_\lambda\cup T_\lambda$ and $\MA=\bigcup_{k\geqslant 0}f_\lambda^{-k}(B_\lambda)$. Since $\partial B_\lambda$, $\partial T_\lambda$ are Jordan curves and $\partial B_\lambda$ does not contain any critical points, it follows that $\partial B_\lambda\cap\partial T_\lambda=\emptyset$. Inductively, for any different components $U,V\in\MA$, we have $\partial U\cap\partial V=\emptyset$.

\medskip
(2) By Lemma \ref{lem:copy}(1), without loss of generality, we assume that there exists $p\geqslant 1$ such that $f_\lambda$ is $p$-renormalizable.
According to \cite[Propositions 4.1 and 7.5]{QWY12}, the filled Julia set $K(f_\lambda^{\circ p})$ is the intersection of a sequence of Yoccoz puzzles, whose boundaries consist of equipotential curves and ``cut rays" in $\MA\cup J(f_\lambda)$. Let $\MP_0$ be the interior of $K(f_\lambda^{\circ p})$. Then $\MP_0$ is a Jordan domain, and for any component $U$ of $\MP\setminus\MP_0$, we have $\partial U\cap\partial \MP_0=\emptyset$.

By the dynamical symmetry (see \cite{DLU05}), without loss of generality, we assume that $\MP_0$ contains the critical point $c_0=\lambda^{\frac{1}{2n}}$. Moreover, there are other $2n-1$ Fatou components $\MP_j=e^{\frac{\pi\ii j}{n}}\MP_0$ containing the critical points $c_j=\lambda^{\frac{1}{2n}}e^{\frac{\pi\ii j}{n}}$ respectively, where $1\leqslant j\leqslant 2n-1$.
Note that $f_\lambda$ has exactly two critical values $\pm 2\sqrt{\lambda}$ in $\C\setminus\{0\}$. It implies that each $\MP_j$ is strictly preperiodic for $j\neq 0, n$ and $f_\lambda$ is $p$-renormalizable exactly at the critical points $c_0$, $c_n$ for odd $n\geqslant 3$ or $p$-renormalizable exactly at the critical point $c_0$ for even $n\geqslant 4$.
We only consider the odd $n\geqslant 3$ since the case for the even $n\geqslant 4$ is completely similar. Therefore, we have
\begin{equation}\label{equ:P}
\MP=\bigcup_{k\geqslant 0}f_\lambda^{-k}(\MP_0\cup\MP_n).
\end{equation}
By a similar argument as above, for any component $U$ of $\MP\setminus\MP_n$, we have $\partial U\cap\partial \MP_n=\emptyset$.
Note that each component of $\MP$ is a Jordan domain whose boundary does not contain critical points.
Inductively, we have $\partial U\cap\partial V=\emptyset$ for any different components $U,V\in\MP$.

\medskip
(3) Without loss of generality, we assume that $n\geqslant 3$ is odd. Then $f_\lambda$ has two $p$-cycles of parabolic Fatou components
$\{\MP_0$, $f_\lambda(\MP_0)$, $\cdots$, $f_\lambda^{\circ (p-1)}(\MP_0)\}$ and $\{\MP_n$, $f_\lambda(\MP_n)$, $\cdots$, $f_\lambda^{\circ (p-1)}(\MP_n)\}$. If $p\geqslant 2$, then \begin{equation}\label{equ:f-conf}
f_\lambda^{\circ (p-i)}:f_\lambda^{\circ i}(\MP_0)\to\MP_0 \text{\quad and\quad} f_\lambda^{\circ (p-i)}:f_\lambda^{\circ i}(\MP_n)\to\MP_n
\end{equation}
are conformal for all $1\leqslant i\leqslant p-1$.
By Lemma \ref{lem:copy}(2), $\sharp(\partial\MP_j\cap\partial B_\lambda)=1$ and $\sharp(\partial\MP_j\cap\partial T_\lambda)=1$ for all $0\leqslant j\leqslant 2n-1$.
Hence by \eqref{equ:f-conf}, for $0\leqslant j\leqslant 2n-1$, we have $\sharp(f_\lambda^{\circ i}(\partial\MP_j)\cap\partial B_\lambda)=1$ for all $0\leqslant i\leqslant p-1$ and $f_\lambda^{\circ i}(\partial\MP_j)\cap\partial T_\lambda=\emptyset$ for all $1\leqslant i\leqslant p-1$ since $f_\lambda^{-1}(\partial B_\lambda)=\partial B_\lambda\cup\partial T_\lambda$.

Let $U\in\MA\setminus B_\lambda$ and $V\in\MP$ such that $\partial U\cap\partial V\neq\emptyset$ and $k_1$, $k_2\geqslant 0$ be the smallest integers such that $f_\lambda^{\circ k_1}(U)=T_\lambda$ and $f_\lambda^{\circ k_2}(V)=\MP_j$ for some $0\leqslant j\leqslant 2n-1$. By \eqref{equ:f-conf}, the maps $f_\lambda^{\circ k_1}:U\to T_\lambda$ and $f_\lambda^{\circ k_2}:V\to\MP_j$ are both conformal.
If $k_2>k_1$, then $f_\lambda^{\circ k_2}(U)=B_\lambda$. Since $\sharp(\partial\MP_j\cap\partial B_\lambda)=1$, we have $\sharp(\partial U\cap\partial V)=1$.
If $k_2=k_1$, since $\sharp(\partial\MP_j\cap\partial T_\lambda)= 1$, we have $\sharp(\partial U\cap\partial V)=1$.
If $k_2<k_1$, then $f_\lambda^{\circ k_1}(V)=f_\lambda^{\circ (k_1-k_2)}(\MP_j)$. Since $\sharp(f_\lambda^{\circ (k_1-k_2)}(\partial\MP_j)\cap\partial T_\lambda)\leqslant 1$, we have $\sharp(\partial U\cap\partial V)=1$. Similarly, one can obtain $\sharp(\partial B_\lambda\cap\partial V)\leqslant 1$ for all $V\in\MP$ and the proof is complete.
\end{proof}

The following result is included in \cite{DRS07} for the special case when $f_\lambda$ is critically finite with $\lambda\in\partial\MH_0$ (see Theorem \ref{thm:QRWY}(2)). Although the idea is similar, we include a proof here for the completeness.

\begin{lem}\label{lem:crit-on-B}
Suppose $\partial B_\lambda$ contains the critical set $\Crit_\lambda$. Let $\MA$ be the collection of components of the super-attracting basin of $f_\lambda$. Then $\partial U\cap\partial V$ share at most $2n$ points  for any different components $U$ and $V$ of $\MA$.
\end{lem}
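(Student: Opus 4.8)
The plan is to induct on the level $\ell(W)$ of a super-attracting component $W\in\MA$, defined as the least $k\geqslant0$ with $f_\lambda^{\circ k}(W)=B_\lambda$. Before that I would assemble the structural facts. From $f_\lambda^{-1}(B_\lambda)=B_\lambda\cup T_\lambda$, with $f_\lambda$ of local degree $n$ at $\infty\in B_\lambda$ and at $0\in T_\lambda$, the only components whose interior meets the critical set $\{0,\infty\}\cup\Crit_\lambda$ are $B_\lambda$ and $T_\lambda$, while $\Crit_\lambda\subset\partial B_\lambda$; the critical values are $\infty\in B_\lambda$ and $\pm2\sqrt\lambda\in\partial B_\lambda$. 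Hence no component $W\neq B_\lambda$ contains a critical value, so each component of $f_\lambda^{-1}(W)$ maps conformally onto $W$; in particular, for $W\in\MA\setminus\{B_\lambda,T_\lambda\}$ the map $f_\lambda$ restricts to a homeomorphism $\overline W\to\overline{f_\lambda(W)}$ (every Fatou component is a Jordan domain by Corollary~\ref{cor:mcm-lc}), with $\ell(f_\lambda(W))=\ell(W)-1$. This reduces everything to understanding how $f_\lambda$ transports the contact sets $\partial U\cap\partial V$.

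The base case treats pairs of level at most $1$, i.e.\ $\{B_\lambda,T_\lambda\}$, for which I claim $\partial B_\lambda\cap\partial T_\lambda=\Crit_\lambda$, giving exactly $2n$ contact points. For $\Crit_\lambda\subseteq\partial T_\lambda$ I would use the fold at each $c_j$: with $v=f_\lambda(c_j)\in\partial B_\lambda$, the preimage of the Jordan arc $\partial B_\lambda$ near $v$ under the local degree-two map $f_\lambda$ is a cross of four prongs meeting at $c_j$; the simple arc $\partial B_\lambda$ supplies two of them, and since $f_\lambda^{-1}(\partial B_\lambda)=\partial B_\lambda\cup\partial T_\lambda$, the other two lie on $\partial T_\lambda$, so $c_j\in\partial T_\lambda$. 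For the reverse inclusion, a putative contact point $p\notin\Crit_\lambda$ is regular, so $f_\lambda$ is a homeomorphism on some $N\ni p$; then $(\partial B_\lambda\cup\partial T_\lambda)\cap N=f_\lambda^{-1}(\partial B_\lambda)\cap N$ is a single arc, forcing $\partial B_\lambda$ and $\partial T_\lambda$ to coincide near $p$ with $B_\lambda,T_\lambda$ on its two sides; but $f_\lambda$ sends both sides into $B_\lambda$ (as $f_\lambda(T_\lambda)=B_\lambda$), contradicting injectivity on $N$. Thus $\sharp(\partial B_\lambda\cap\partial T_\lambda)=\sharp\Crit_\lambda=2n$.

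For the inductive step, take distinct $U,V\in\MA$ with $m:=\max\{\ell(U),\ell(V)\}\geqslant2$ and assume the bound for all pairs of smaller maximal level. Say $\ell(U)=m$, so $U\notin\{B_\lambda,T_\lambda\}$ and $f_\lambda|_{\overline U}$ is injective. Writing $U'=f_\lambda(U)$ and $V'=f_\lambda(V)$, one has $f_\lambda(\partial U)=\partial U'$ and $f_\lambda(\partial V)\subseteq\partial V'$, so $f_\lambda$ maps $\partial U\cap\partial V$ injectively into $\partial U'\cap\partial V'$ and $\sharp(\partial U\cap\partial V)\leqslant\sharp(\partial U'\cap\partial V')$. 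If $U'\neq V'$ this is a pair of maximal level at most $m-1$, and the inductive hypothesis gives the bound $2n$.

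The main obstacle is the coincidence case $U'=V'=:W'$, where pushing forward loses information. Since $U\notin\{B_\lambda,T_\lambda\}$ we have $W'\neq B_\lambda$, so $U$ and $V$ are two distinct conformal preimages of $W'$. The crucial point is that distinct sheets of $f_\lambda^{-1}(W')$ can meet only over a branch point: if $p\in\partial U\cap\partial V$ were regular, then on a small $N\ni p$ the set $f_\lambda^{-1}(W')\cap N$ would be the connected homeomorphic preimage of the one local piece of $W'$ at $f_\lambda(p)$, and so could not meet the two different components $U$ and $V$. Hence every point of $\partial U\cap\partial V$ is critical, and as $0,\infty$ are interior points it must lie in $\Crit_\lambda$; therefore $\sharp(\partial U\cap\partial V)\leqslant\sharp\Crit_\lambda=2n$, completing the induction. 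I expect the exact identification in the base case and this coincidence case to be the substantive steps, the rest being bookkeeping with the level function.
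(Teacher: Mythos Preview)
Your proof is correct and takes a genuinely different route from the paper. The paper builds an explicit tree coding of the Fatou components: it first decomposes $\EC\setminus(\overline{B}_\lambda\cup\overline{T}_\lambda)$ into $2n$ Jordan domains $W_j$, each mapped conformally onto $\EC\setminus\overline{B}_\lambda$ by $f_\lambda$, then inductively defines components $U_{j_1,\ldots,j_k}$ and nested regions $W_{j_1,\ldots,j_k}$, and reads off the bound $\sharp(\partial U\cap\partial V)\leqslant 2n$ from this combinatorial structure. Along the way it uses that no free critical point is periodic (otherwise $\Crit_\lambda\subset\partial B_\lambda$ would fail) to keep the coding injective. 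Your argument bypasses all of this: you induct directly on the level, pushing forward by $f_\lambda$ and splitting into the cases $f_\lambda(U)\neq f_\lambda(V)$ (apply the inductive hypothesis at lower level) and $f_\lambda(U)=f_\lambda(V)$ (distinct conformal sheets over the same image can meet only at branch points, forcing the contact into $\Crit_\lambda$). Your approach is shorter and does not need the non-periodicity observation; on the other hand, the paper's tree structure is reused in the proof of Theorem~\ref{thm:McM-family} to verify that no three components share a boundary point and that the contact graph is connected, so its extra setup pays off downstream.

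One small point worth making explicit: your claim that $f_\lambda$ restricts to a homeomorphism $\overline W\to\overline{f_\lambda(W)}$ for $W\in\MA\setminus\{B_\lambda,T_\lambda\}$ is correct, but the absence of critical values in $f_\lambda(W)$ only gives conformality on the open set $W$. To get injectivity on $\overline W$ you should invoke Carath\'eodory's theorem (a conformal map between Jordan domains extends to a homeomorphism of closures, and this extension must agree with the globally defined $f_\lambda$ by continuity). This covers even the possibility that some $c_j\in\Crit_\lambda$ lies on $\partial W$, without having to rule that out separately.
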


\begin{proof}
By Theorem \ref{thm:QRWY}(2) and Corollary \ref{cor:mcm-lc}, $\EC\setminus\overline{B}_\lambda$ is a Jordan domain and contains no critical values. Hence $f_\lambda^{-1}(\EC\setminus\overline{B}_\lambda)$ consists of exactly $2n$ components $W_j$ which are Jordan domains and $f_\lambda: W_j\to\EC\setminus\overline{B}_\lambda$ is conformal (see Lemma \ref{lem:Jordan}(3)), where $W_j=e^{\frac{\pi\ii j}{n}}W_0$ for $0\leqslant j\leqslant 2n-1$. Since $f_\lambda^{-1}(B_\lambda)=B_\lambda\cup T_\lambda$, we have $\partial B_\lambda\cap\partial T_\lambda=\Crit_\lambda$ and $\sharp(\partial B_\lambda\cap\partial T_\lambda)=2n$.

Let $U_j\subset W_j$ such that $f_\lambda(U_j)=T_\lambda$, where $0\leqslant j\leqslant 2n-1$. Then $f_\lambda^{-1}(T_\lambda)=\bigcup_{0\leqslant j\leqslant 2n-1} U_j$. Moreover, for each $0\leqslant j\leqslant 2n-1$, there are exactly $2n$ points $\{c_{j,\ell}:0\leqslant \ell\leqslant 2n-1\}=\partial W_j\cap\partial U_j$ such that $f_\lambda(c_{j,\ell})=c_\ell\in\partial B_\lambda\cap\partial T_\lambda$. Note that
\begin{equation}
\{c_{j,\ell}:0\leqslant \ell\leqslant 2n-1\}\cap\Crit_\lambda=\emptyset
\end{equation}
for any $0\leqslant j\leqslant 2n-1$ since otherwise some critical point $c_\ell$ would be periodic, which contradicts to the fact that $\Crit_\lambda\subset\partial B_\lambda$. This implies that $W_j\setminus\overline{U}_j$ consists of $2n$ components $\{W_{j,\ell}:0\leqslant \ell\leqslant 2n-1\}$ which are Jordan domains such that $f_\lambda:W_{j,\ell}\to W_\ell$ is conformal. Thus we have $\sharp(\partial U_j\cap\partial B_\lambda)\leqslant 2n$, $\sharp(\partial U_j\cap\partial T_\lambda)\leqslant 2n$ and $\sharp(\partial U_i\cap\partial U_j)=\emptyset$ for $0\leqslant i\neq j\leqslant 2n-1$.

Since $f_\lambda: W_j\to\EC\setminus\overline{B}_\lambda$ is conformal, inductively, one can obtain $W_{j_1,\cdots,j_k}$, $U_{j_1,\cdots,j_k}$, $c_{j_1,\cdots,j_k}$ for all $k\geqslant 2$, where $0\leqslant j_k\leqslant 2n-1$, such that
\begin{itemize}
\item $\overline{W}_{j_1,\cdots,j_k}=\overline{U}_{j_1,\cdots,j_k}\cup \{\overline{W}_{j_1,\cdots,j_k,j_{k+1}}:0\leqslant j_{k+1}\leqslant 2n-1\}$;
\item $f_\lambda(U_{j_1,\cdots,j_k})=U_{j_2,\cdots,j_k}$ and $f_\lambda(W_{j_1,\cdots,j_{k+1}})=U_{j_2,\cdots,j_{k+1}}$;
\item $\partial W_{j_1,\cdots,j_k}\cap\partial U_{j_1,\cdots,j_k}=\{c_{j_1,\cdots,j_k,j_{k+1}}:0\leqslant j_{k+1}\leqslant 2n-1\}$;
\item $c_{j_1,\cdots,j_k}=c_{j_1',\cdots,j_\ell'}$ if and only if $k=\ell$ and $(j_1,\cdots,j_k)=(j_1',\cdots,j_\ell')$;
\item $\sharp(\partial U_{j_1,\cdots,j_k}\cap\partial B_\lambda)\leqslant 2n$, $\sharp(\partial U_{j_1,\cdots,j_k}\cap\partial T_\lambda)\leqslant 2n$ and $\sharp(\partial U_{j_1,\cdots,j_k}\cap\partial U_{j_1',\cdots,j_\ell'})\leqslant 2n$, where $0\leqslant j_1',\cdots,j_\ell'\leqslant 2n-1$ and $1\leqslant \ell\leqslant k$.
\end{itemize}
Note that
\begin{equation}\label{equ:A}
\MA=\bigcup_{k\geqslant 0}f_\lambda^{-k}(B_\lambda)=B_\lambda\cup T_\lambda\cup\bigcup_{k\geqslant 1}~\bigcup_{0\leqslant j_1,\cdots,j_k\leqslant 2n-1}U_{j_1,\cdots,j_k}.
\end{equation}
Thus for any two different components $U$ and $V$ of $\MA$, we have $\sharp(\partial U\cap\partial V)\leqslant 2n$.
\end{proof}

\begin{proof}[Proof of Theorem \ref{thm:McM-family}]
Based on Corollary \ref{cor:mcm-lc}, Lemmas \ref{lem:no-intersect} and \ref{lem:crit-on-B}, it suffices to verify the last two properties in the definition of generalized Sierpi\'{n}ski gaskets. By Theorem \ref{thm:QRWY}(2), we divide the argument into two cases.

\medskip
\textbf{Case 1}: \textit{$\partial B_\lambda$ contains a parabolic cycle of $f_\lambda$}.
Let $\MA$ and $\MP$ be the collection of components of the super-attracting basin and all parabolic basins of $f_\lambda$ respectively. Assume there are three different Fatou components of $f_\lambda$ having a common boundary point. Then at least two of them belong to either $\MA$ or $\MP$. By Lemma \ref{lem:no-intersect}(1)(2), such two Fatou components have no common boundary point. Hence no three Fatou components of $f_\lambda$ have a common boundary point.

By Lemma \ref{lem:copy}, without loss of generality, we assume that $f_\lambda$ is $p$-renormalizable at the critical point $c_0=\lambda^{\frac{1}{2n}}$ and that $n\geqslant 3$ is odd.
By \eqref{equ:P}, the Fatou set of $f_\lambda$ is
\begin{equation}
F(f_\lambda)=\bigcup_{k\geqslant 0}\MU_k, \quad \text{where } \MU_k=f_\lambda^{-k}\Big(B_\lambda\cup\bigcup_{j=0}^{p-1}f_\lambda^{\circ j}(\MP_0\cup\MP_n)\Big).
\end{equation}
By Lemma \ref{lem:copy}, $\overline{\MU}_0$ is connected and contains all critical values of $f_\lambda$. Since each component of $\EC\setminus\overline{\MU}_0$ is simply connected and $\MU_1=f_\lambda^{-1}(\MU_0)$, it follows that each component of $\EC\setminus\overline{\MU}_1$ is simply connected by the Riemann-Hurwitz formula and $\overline{\MU}_1$ is connected. Inductively, $\overline{\MU}_k$ is connected for all $k\geqslant 0$.
Thus the contact graph corresponding to $J(f_\lambda)$ is connected.

\medskip
\textbf{Case 2}: \textit{$\partial B_\lambda$ contains the critical set $\Crit_\lambda$}.
Assume that there are three different Fatou components $V_0$, $V_1$ and $V_2$ of $f_\lambda$ having a common boundary point $z_0$. By \eqref{equ:A}, except $B_\lambda$ and $T_\lambda$, every Fatou component of $f_\lambda$ can be written as $U_{j_1,\cdots,j_k}$, whose closure does not contain any critical point.
Since $F(f_\lambda)=\bigcup_{k\geqslant 0}f_\lambda^{-k}(B_\lambda)$ and $\partial B_\lambda\cap\partial T_\lambda=\Crit_\lambda$, iterating several times if necessary, we can assume that $V_0=B_\lambda$, $V_1=T_\lambda$ and $z_0\in\Crit_\lambda$. This is a contradiction since $\partial V_2\cap\Crit_\lambda=\emptyset$.
Hence no three Fatou components of $f_\lambda$ have a common boundary point.

Note that $\overline{B}_\lambda$ is connected and contains all critical values of $f_\lambda$. Since $\EC\setminus\overline{B}_\lambda$ is simply connected, it follows that each component of $\EC\setminus f_\lambda^{-1}(\overline{B}_\lambda)=\EC\setminus(\overline{B}_\lambda\cup \overline{T}_\lambda)$ is simply connected by the Riemann-Hurwitz formula and therefore $f_\lambda^{-1}(\overline{B}_\lambda)$ is connected. Inductively, $f_\lambda^{-k}(\overline{B}_\lambda)$ is connected for all $k\geqslant 0$.
Thus the contact graph corresponding to $J(f_\lambda)$ is connected.
\end{proof}

Besides $\lambda\in\partial\MH_0$, we can find some gasket Julia sets for $\lambda\not\in\MH_0$ as following. According to \cite[Section 7]{Ste06} and \cite[Section 5]{QWY12}, there exists a homeomorphic copy $\mathcal{M}$ of the Mandelbrot set $M$ in the non-escape locus $\mathcal{N}$ such that for each $c\in M$, there exist $\lambda=\lambda(c)\in\MM$ and two Jordan domains $U_\lambda$, $V_\lambda$ such that $f_\lambda:U_\lambda\to V_\lambda$ is a quadratic-like mapping and hybird equivalent to $Q_c$, i.e., $f_\lambda$ is $1$-renormalizable. Moreover, the filled Julia set of $f_\lambda$ in $U_\lambda$ intersects $\partial B_\lambda$ and $\partial T_\lambda$ at exactly one point respectively.

The case when $Q_c(z)=z^2+1/4$ is considered in Theorem \ref{thm:McM-family}. In fact, if $Q_c$ has an attracting fixed point (including super-attracting), then the corresponding McMullen map $f_\lambda$ with $\lambda=\lambda(c)$ is hyperbolic and the Julia set $J(f_\lambda)$ is still a gasket Julia set. See the right picture in Figure \ref{Fig:Julia-gasket-McM}.

\section{Area of gasket Julia sets}\label{sec:area-dim}

As far as we know, except the examples in Theorem \ref{thm:McM-family}, all the known gasket Julia sets are generated by geometrically finite rational maps and they have Hausdorff dimension strictly less than two. In this section we study the area of generalized Sierpi\'{n}ski gasket Julia sets.

\subsection{A criterion to obtain Jordan domains}

In order to show that a Julia set is a generalized Sierpi\'{n}ski gasket, it is necessary to prove that every Fatou component is a Jordan domain. In general, we start from the periodic Fatou components and then consider their preimages. The following criterion is useful to obtain Jordan domains.

\begin{lem}\label{lem:Jordan}
Let $f:U'\to V'$ be a proper holomorphic map. Suppose $V$ is a Jordan domain which is compactly contained in $V'$ and $U$ is a component of $f^{-1}(V)$ in $U'$. Assume that one of the following conditions is satisfied:
\begin{enumerate}
\item $\partial U$ does not contain any critical points and $U$ is simply connected; or
\item $\overline{V}$ contains at most one critical value; or
\item $V$ contains no critical values.
\end{enumerate}
Then $U$ is a Jordan domain.
\end{lem}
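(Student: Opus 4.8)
The plan is to show that, under any of the three hypotheses, the restriction $f\colon U\to V$ is a proper holomorphic map of some finite degree $d:=\deg(f|_U)\geqslant 1$, that $U$ is simply connected, and that $\partial U$ is a Jordan curve; the last two facts together force $U$ to be a Jordan domain. Properness of $f|_U$ is immediate: since $V$ is compactly contained in $V'$, the set $f^{-1}(\overline V)$ is a compact subset of $U'$ containing $\overline U$ (so $f$ is holomorphic on a neighborhood of $\overline U$), and because $\partial U\subseteq f^{-1}(\partial V)$ is disjoint from $V$, the preimage in $U$ of any compact subset of $V$ is compact. In particular $f(U)=V$ and $d$ is well defined. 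I will also use two standard topological facts about a domain $\Omega\subsetneq\EC$: its Euler characteristic satisfies $\chi(\Omega)=2-(\#\,\text{complementary components})\leqslant 1$, with equality iff $\Omega$ is simply connected; and $\Omega$ is simply connected iff $\partial\Omega$ is connected.

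First I would establish simple connectivity through the Riemann--Hurwitz formula $\chi(U)=d\,\chi(V)-r=d-r$, where $r=\sum_{c\in U}(\deg_c f-1)\geqslant 0$ is the total ramification of $f|_U$ and $\chi(V)=1$. Under (3), $V$ carries no critical values, so $r=0$ and $\chi(U)=d$; combined with $\chi(U)\leqslant 1$ and $d\geqslant 1$ this yields $d=1$ and $\chi(U)=1$, i.e. $f|_U\colon U\to V$ is a conformal isomorphism and $U$ is simply connected. Under (2), if the unique critical value $w^{*}$ lies on $\partial V$ or is absent, we are back in case (3); if $w^{*}\in V$, then every critical point of $f|_U$ lies over $w^{*}$, so $r=d-m$ with $m=\#\big(f|_U^{-1}(w^{*})\big)$, giving $\chi(U)=m$, and $1\geqslant\chi(U)=m\geqslant 1$ forces $\chi(U)=1$ and $m=1$. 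Under (1) simple connectivity is part of the hypothesis.

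Next I would prove that $\partial U$ is a Jordan curve by checking that it is a compact, connected topological $1$-manifold. It is compact (being contained in $f^{-1}(\overline V)$) and connected (since $U$ is simply connected). For the local structure, note $\partial U\subseteq f^{-1}(\partial V)$ and that $\partial V$ is locally an arc. At a point $z_0\in\partial U$ where $f$ is a local biholomorphism, $f^{-1}(\partial V)$ is locally a single arc through $z_0$; the side of this arc mapping into $V$ is a connected piece of $f^{-1}(V)$ accumulating at $z_0$, hence belongs to the component $U$, so $\partial U$ coincides locally with that arc. In case (1) and in case (2) with $w^{*}\in V$, the boundary contains no critical points at all — any such point would map to a critical value on $\partial V$, which is excluded — so $\partial U$ is already a $1$-manifold, hence a Jordan curve bounding $U$.

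The hard part will be the remaining situation, case (3) (and case (2) reduced to it), where $\partial U$ may contain a critical point $z_0$ of local degree $k\geqslant 2$. There $f^{-1}(\partial V)$ near $z_0$ is a union of $k$ arcs through $z_0$, cutting a punctured neighborhood into $2k$ sectors that lie alternately inside and outside $f^{-1}(V)$. A priori $U$ could meet several of the $k$ inner sectors, which touch only at $z_0$; then $\partial U$ would pinch at $z_0$ and fail to be a manifold, which is precisely the obstruction to being a Jordan domain (as a slit disk shows). The key is that in this case $d=1$, so $f|_U$ is injective, while each inner sector already maps bijectively onto the local $V$-side; hence $U$ can contain only one such sector, and $\partial U$ is locally the two bounding prongs of that sector — an arc with a corner, still a manifold point. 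Thus $\partial U$ is everywhere locally an arc, so it is a compact connected $1$-manifold, i.e. a Jordan curve. Finally, as $U$ is an open connected subset of $\EC\setminus\partial U$ whose boundary is this entire curve, it must coincide with one of the two Jordan domains the curve bounds, completing the proof.
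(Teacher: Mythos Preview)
Your proof is correct. For cases (1) and (2) you and the paper proceed identically: Riemann--Hurwitz gives simple connectivity, the absence of critical points on $\partial U$ makes it locally an arc, and the classification of compact connected $1$-manifolds finishes.

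For case (3), where $\partial U$ may contain critical points, the arguments diverge. The paper exploits $d=1$ differently: knowing $f|_U$ is conformal, it argues that the continuous extension $f\colon\partial U\to\partial V$ is injective by contradiction --- if $f(x_1)=f(x_2)=y$ for distinct $x_1,x_2\in\partial U$, join them through an interior point by two arcs; the image is a Jordan curve in $\overline V$ touching $\partial V$ only at $y$, which forces an entire subarc of $\partial U$ to collapse to $y$, contradicting holomorphicity near $\overline U$. This yields a homeomorphism $\overline U\to\overline V$ directly. Your route is instead local: the degree-one condition forces $U$ to occupy exactly one of the $k$ sectors at any boundary critical point, so $\partial U$ remains a topological arc there and the $1$-manifold classification applies uniformly. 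Your argument is perhaps more transparent geometrically and keeps all three cases within a single framework; the paper's argument is shorter once one accepts the arc construction, and gives the boundary homeomorphism as a byproduct.
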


\begin{proof}
(1) If $\partial U$ does not contain critical points, then $\partial V$ does not contain any critical values coming from $f|_{\partial U}$. Hence $\partial U$ is locally a Jordan arc since $\partial V$ is a Jordan curve. Note that any nonempty compact connected $1$-dimensional manifold is homeomorphic to the unit circle (see \cite[Theorem 5.27, p.\,145]{Lee11}). It follows that each component of $\partial U$ is a Jordan curve since $f$ is proper. If $U$ is simply connected, then $\partial U$ has exactly one connected component and hence $U$ is a Jordan domain.

Parts (2) and (3) have been proved in \cite[Proposition 2.8]{Pil96} essentially. For the completeness we include a proof here with similar ideas but the argument is slightly different.
For (2), if $V$ contains exactly one critical value, then $U$ is simply connected by the Riemann-Hurwitz formula and $\partial U$ does not contain any critical points. By (1) we conclude that $U$ is a Jordan domain.

To finish the proof, it is sufficient to assume that $V$ contains no critical values. Note that $f:U\to V$ is conformal, we only need to prove that $f:\partial U\to\partial V$ is injective. Otherwise, there exist two simple arcs $\gamma_1,\gamma_2$ in $U$ whose interiors are disjoint, connecting $x_0\in U$ with different $x_1,x_2\in\partial U$ respectively, such that $f(x_1)=f(x_2)=y\in\partial V$.
Then $f(\gamma_1 \cup \gamma_2)$ is a Jordan curve in $\overline{V}$ which intersects $\partial V$ at exactly one point $y$. Hence, there exists a component $W$ of $U\setminus(\gamma_1 \cup \gamma_2)$ such that $f(\partial W \cap \partial U)=y$. Since $x_1 \neq x_2$, the set $\partial W \cap \partial U$ has accumulation points. This is impossible since $f$ is holomorphic in a neighborhood of $\overline{U}$.
\end{proof}

If $V$ and $\partial V$ each contains a critical value and $U$ is simply connected, then $U$ is not necessarily a Jordan domain. For example, consider the rational map $f(z)=z+1/z$ and $V=\{z\in\C:|z-2|<4\}$. Then $V$ and $\partial V$ contain the critical values $2$ and $-2$ of $f$ respectively. Let $U$ be a connected component of $f^{-1}(V)$. By the Riemann-Hurwitz formula, $f:U\to V$ has degree two and hence $f^{-1}(V)$ has exactly one component, which is $U$. But $U$ is not a Jordan domain since otherwise $f:\EC\setminus\overline{U}\to\EC\setminus\overline{V}$ would have degree two, which is a contradiction since $\EC\setminus\overline{V}$ has no critical values.

\subsection{Area of quadratic Julia sets}

In this subsection, we recall some area results on the Julia sets of quadratic polynomials.
A rational map $f$ is \textit{infinitely renormalizable} if it is $p$-renormalizable for infinitely many $p$'s (see Section \ref{subsec:poly-like} for the definition of $p$-renormalization).
Note that irrationally indifferent periodic points correspond to two types: Cremer points (locally non-linearizable) and Siegel disks (locally linearizable).
In the past two decades, the quadratic polynomial Julia sets with positive area were found in all these three cases.

\begin{thm}[{\cite{BC12}, \cite{AL22}, \cite{DL23a}}]\label{thm:positive-area}
There exist quadratic polynomials that have Julia sets with positive area. Such quadratic polynomials either contain a Cremer fixed point, a Siegel disk or are infinitely renormalizable.
\end{thm}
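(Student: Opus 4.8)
The statement is a deep theorem whose complete proofs occupy the cited works \cite{BC12}, \cite{AL22} and \cite{DL23a}; I only sketch the strategy I would follow, working in the normalized quadratic family $P_\alpha(z)=e^{2\pi\ii\alpha}z+z^2$, which has an indifferent fixed point at $0$ with rotation number $\alpha\in\R/\Z$. Write $K_\alpha:=K(P_\alpha)$ for the filled Julia set and $A(\alpha):=\Area(K_\alpha)$. The first step is to record that $\alpha\mapsto A(\alpha)$ is \emph{upper semicontinuous}: since $\EC\setminus K_\alpha$ is the basin of $\infty$ and the associated Green's function depends jointly continuously on $(z,\alpha)$, one has $\limsup_n K_{\alpha_n}\subseteq K_\alpha$ in the Hausdorff sense whenever $\alpha_n\to\alpha$, and reverse Fatou (the sets being uniformly bounded) yields $A(\alpha)\geqslant\limsup_n A(\alpha_n)$. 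Consequently it suffices to produce a sequence $\alpha_n\to\alpha_*$ along which $A(\alpha_n)\geqslant\delta$ for a fixed $\delta>0$, while arranging that the \emph{interior} of $K_{\alpha_*}$ carries area at most $A(\alpha_*)-\delta$; the surplus then lies in $J(P_{\alpha_*})=\partial K_{\alpha_*}$.

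The heart of the argument is a quantitative \emph{control-of-area} estimate furnished by parabolic and near-parabolic renormalization (the Inou--Shishikura class). Starting from a Brjuno number $\alpha=[a_0,a_1,\dots]$ with a Siegel disk, I would perturb to $\alpha'=[a_0,\dots,a_{n-1},a_n,\dots]$ agreeing with $\alpha$ to depth $n-1$ but with $a_n$ enormous. At the rational truncation $p_n/q_n$ the map $P_{p_n/q_n}$ carries a parabolic cycle of period $q_n$, and under the perturbation a cycle of the same period is born by parabolic implosion; near-parabolic renormalization then provides \emph{uniform} geometric bounds on the postcritical set and on the nest of regions enclosed by these cycles. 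The quantitative output to aim for is: given any $\epsilon>0$, one may take $a_n$ so large that $A(\alpha')>A(\alpha)-\epsilon$ while a definite proportion of area is trapped between the newly created cycle and the Julia set. In other words, a definite amount of area is forced onto $\partial K$ at the cost of an arbitrarily small loss from $K$ itself.

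Iterating this, I would construct $\alpha_n\to\alpha_*$ with the continued-fraction entries growing fast enough that $\sum_n\epsilon_n$ is finite and small, ensuring $A(\alpha_n)\geqslant\delta>0$ for all $n$ and hence, by upper semicontinuity, $A(\alpha_*)\geqslant\delta>0$. The three cases differ only in the arithmetic of $\alpha_*$ and in how the interior of $K_{\alpha_*}$ is accounted for. In the \emph{Cremer} case I would steer $\alpha_*$ to be non-Brjuno, so that $P_{\alpha_*}$ has no Siegel disk at $0$; since a quadratic with an indifferent fixed point has no non-repelling cycle beyond the fixed points at $0$ and $\infty$ (the Fatou--Shishikura count being saturated), $K_{\alpha_*}$ has \emph{empty interior}, whence $J(P_{\alpha_*})=K_{\alpha_*}$ has positive area automatically. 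In the \emph{Siegel} case I would keep $\alpha_*$ Brjuno but arrange the Siegel disk together with all its preimages to occupy area strictly less than $A(\alpha_*)$, so that the surplus lies in the Julia set accumulating on the boundary of the Siegel disk. In the \emph{infinitely renormalizable} case I would run an analogous tower construction, or invoke the Feigenbaum renormalization fixed-point analysis of \cite{AL22}, where the uniform geometric control is supplied at every renormalization level by the renormalization limit.

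The main obstacle is unquestionably the control-of-area estimate sketched in the second paragraph: making the ``small loss of area'' both quantitative and, crucially, \emph{uniform} across the infinitely many inductive steps, so that the limit parameter $\alpha_*$ exists with the prescribed arithmetic type. This requires the full strength of the (near-)parabolic renormalization theory, a precise comparison of the geometry of perturbed Siegel disks with the parabolic model, and a delicate measure-theoretic argument certifying that the area expelled from the interior genuinely condenses onto the Julia set rather than escaping into the basin of $\infty$. By contrast, the remaining ingredients -- the semicontinuity of $A$ and the Fatou--Shishikura count -- are comparatively standard.
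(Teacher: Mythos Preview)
The paper does not prove this theorem at all: it is stated as a cited result from \cite{BC12}, \cite{AL22}, \cite{DL23a}, with only the two-sentence remark that Buff--Ch\'eritat handled the Cremer, Siegel and infinitely renormalizable cases, while Avila--Lyubich and Dudko--Lyubich gave further infinitely renormalizable examples. The theorem is used purely as a black box in the proof of Theorem~\ref{thm:area-dim}.

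Your proposal goes well beyond what the paper does, by sketching the Buff--Ch\'eritat strategy itself. As a high-level outline it is accurate: the upper semicontinuity of $\alpha\mapsto\Area(K_\alpha)$, the perturbation of continued-fraction tails using Inou--Shishikura near-parabolic renormalization to control the area loss, and the iterated limit construction are indeed the backbone of \cite{BC12}. You are also right that the Cremer case is cleanest (empty interior forces $J=K$), while the Siegel case requires the additional estimate that the Siegel disk and its preimages do not exhaust the area of $K$. One small correction of emphasis: for the infinitely renormalizable examples in \cite{AL22} and \cite{DL23a} the mechanism is genuinely different from the parabolic-implosion tower you describe---those works exploit, respectively, a priori bounds for Feigenbaum-type combinatorics and near-neutral renormalization, rather than iterated parabolic perturbation---so lumping them under ``an analogous tower construction'' undersells the distinction. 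But since the paper under review only \emph{cites} these results, none of this affects the comparison: you have supplied a proof sketch where the paper supplies none.
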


Buff and Ch\'{e}ritat proved the existence of quadratic Julia sets with positive area in three cases: Cremer, Siegel and infinitely renormalizable. Avila and Lyubich, Dudko and Lyubich found other two different types of infinitely renormalizable quadratic polynomials having Julia sets with positive area.

\subsection{A quadratic family with Siegel disks}

The Julia set of any polynomial cannot be a gasket. To construct gasket Julia sets, we first consider quadratic rational maps. In the definition of generalized Sierpi\'{n}ski gasket, the connectivity of the contact graph corresponding to the Julia set should be given priority consideration. This inspires us to consider the family of quadratic rational maps having a fixed bounded type Siegel disk since the immediate preimage of this Siegel disk is attached at its boundary. Specifically, let $\theta$ be a bounded type irrational number. Define
\begin{equation}
q_\lambda(z)=\frac{z^2-\lambda}{e^{2\pi\ii\theta}(z-\frac{1+\lambda}{2})},
\end{equation}
where $\lambda\in\C\setminus\{1\}$. A direct calculation shows that $q_\lambda$ has two critical points $1$ and $\lambda$. By \cite{Zha11}, $q_\lambda$ has a Siegel disk $\Delta_\lambda$ centered at $\infty$ with rotation number $\theta$ whose boundary is a Jordan curve (actually a quasi-circle) containing the critical point $1$, or $\lambda$, or both.

\begin{lem}\label{lem:gasket-pre}
Suppose $q_\lambda$ satisfies one of the following:
\begin{enumerate}
\item it is renormalizable and the small filled Julia set has no interior; or
\item the forward orbit of a critical point is finite in the Julia set; or
\item the forward orbit of a critical point intersects the closure of $\Delta_\lambda$.
\end{enumerate}
Then $J(q_\lambda)$ is connected, locally connected and every Fatou component of $q_\lambda$ is a Jordan domain.
\end{lem}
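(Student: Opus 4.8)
The plan is to establish the three claimed properties—connectedness of $J(q_\lambda)$, local connectivity, and the Jordan property of every Fatou component—in that order, treating the three hypotheses (1), (2), (3) in parallel where possible. First I would observe that in all three cases the map $q_\lambda$ has no irrationally indifferent cycle other than the fixed Siegel disk $\Delta_\lambda$ centered at $\infty$, and no non-preperiodic critical orbit wandering freely: under (2) and (3) the critical orbits are either finite in $J$ or eventually enter $\overline{\Delta}_\lambda$, while under (1) the renormalizable structure confines the remaining critical dynamics to the small filled Julia set. This should be enough to conclude that $q_\lambda$ is geometrically finite, or at least that its postcritical set has controlled accumulation on $J$. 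Connectedness of $J(q_\lambda)$ then follows because neither critical orbit escapes to a common attracting basin separating the sphere; equivalently, every Fatou component is simply connected, which I would verify using the Riemann–Hurwitz formula applied to the periodic components (the Siegel disk and its immediate preimage) and then pulled back.

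\textbf{Local connectivity.} For local connectivity of $J(q_\lambda)$, I expect to invoke the appropriate structural theorem in each case. In case (2), where a critical point is strictly preperiodic or periodic in $J$, the map is subhyperbolic (or geometrically finite with the Siegel disk as the only rotation domain), and local connectivity of the Julia set of a geometrically finite rational map with locally connected Fatou boundaries follows from \cite{TY96}. In case (3), where a critical orbit lands in $\overline{\Delta}_\lambda$, the relevant input is Zhang's result \cite{Zha11} that $\partial\Delta_\lambda$ is a quasicircle through the critical point; combining the quasiconformal surgery that linearizes the Siegel disk with the bounded-type (hence Brjuno, indeed Herman) rotation number $\theta$, one obtains local connectivity of $J$ via the puzzle/para-puzzle machinery or directly from the geometric finiteness once the critical orbit is absorbed. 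Case (1) is handled by renormalization: the hypothesis that the small filled Julia set has empty interior means the renormalization is a polynomial-like map hybrid-equivalent to a $Q_c$ with $J=K$, so local connectivity of $J(q_\lambda)$ reduces to local connectivity of the small Julia sets (which are dendrites, hence locally connected) glued along the locally connected outer structure.

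\textbf{Jordan Fatou components.} Once local connectivity is in hand, the Jordan property of every Fatou component is the most routine part: I would apply Lemma \ref{lem:Jordan} inductively. The two periodic Fatou components to control are $\Delta_\lambda$ itself, whose boundary is already a Jordan curve by \cite{Zha11}, and its immediate preimage. For $\Delta_\lambda$ the immediate preimage is attached at the boundary and carries a critical value, so one invokes Lemma \ref{lem:Jordan}(2), which permits exactly one critical value in $\overline{V}$. Pulling back along the dynamics, any further Fatou component $U$ maps with some iterate of $q_\lambda$ onto a Jordan domain, and since (outside finitely many critical pull-backs) the relevant restriction is eventually conformal or has a boundary free of critical points, Lemma \ref{lem:Jordan}(1) or (3) applies and $U$ is a Jordan domain.

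\textbf{Main obstacle.} The hard part will be establishing local connectivity of $J(q_\lambda)$ uniformly across the three hypotheses, and in particular case (3) when the critical orbit merely \emph{accumulates on} or lands \emph{on} $\overline{\Delta}_\lambda$ without the map being geometrically finite in the classical sense. The presence of a bounded-type Siegel disk means the standard Tan–Yin theorem does not apply verbatim, so I anticipate needing either the quasiconformal surgery of \cite{Zha11} to replace the Siegel disk by a controlled model and transport local connectivity, or a separate puzzle-based argument; reconciling these three somewhat different mechanisms into a single clean statement is where the real work lies.
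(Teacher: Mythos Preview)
Your outline has the right pieces but the order and the mechanism for local connectivity are both off, and the latter is a genuine gap.

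\textbf{Order and logic.} The paper does not prove connectedness, then local connectivity, then Jordan components. It first observes (via \cite{Shi87}) that under any of the three hypotheses $\Delta_\lambda$ is the \emph{only} periodic Fatou component, so $F(q_\lambda)=\bigcup_{k\geqslant 0}q_\lambda^{-k}(\Delta_\lambda)$. It then proves inductively that every Fatou component is a Jordan domain, and deduces connectedness of $J(q_\lambda)$ from that. Local connectivity comes last and \emph{uses} the Jordan property as input. Your inductive step for Jordan domains is also too breezy: the bad case is when a component $U$ with $q_\lambda(U)=V$ has one critical point in $U$ and one on $\partial U$, so that $\overline{V}$ carries two critical values and none of Lemma~\ref{lem:Jordan}(1)--(3) applies directly. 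The paper disposes of this by noting that the boundary critical point must lie on $\partial\Delta_\lambda$, forcing $V=\Delta_\lambda$, which is impossible since $\deg q_\lambda=2$ and $U\neq\Delta_\lambda$.

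\textbf{The real gap: local connectivity.} You propose \cite{TY96} for case~(2), but Tan--Yin applies to geometrically finite maps, i.e.\ maps with no rotation domains; the Siegel disk $\Delta_\lambda$ disqualifies $q_\lambda$ outright, as you yourself note at the end. Case~(1) is not handled by saying the small Julia set is a dendrite---that gives local connectivity of the small piece, not of the global $J(q_\lambda)$. The paper's uniform argument is the idea you are missing: once all Fatou components are Jordan domains, Whyburn's criterion \cite[Theorem~4.4]{Why42} reduces local connectivity of $J(q_\lambda)$ to showing that the spherical diameters of the Fatou components tend to zero. This in turn follows from a shrinking lemma once one checks that in each of the three cases the postcritical set off $\partial\Delta_\lambda$ stays at positive distance from $\partial\Delta_\lambda$; for case~(1) this requires the short argument that the small filled Julia set $K_0$ is disjoint from $\partial\Delta_\lambda$ (else $\partial\Delta_\lambda\subset K_0$ by minimality of the orbit closure, putting both critical points in $K_0$). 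The shrinking itself uses \cite[Main Lemma]{WYZZ25} near $\partial\Delta_\lambda$ together with the classical shrinking lemma elsewhere. This single mechanism handles all three cases at once and bypasses any need for puzzle machinery or case-by-case structural theorems.
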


\begin{proof}
If $q_\lambda$ is in one of the three cases above, then except the Siegel disk $\Delta_\lambda$, the map $q_\lambda$ has no other periodic Fatou components \cite{Shi87}. Hence the Fatou set of $q_\lambda$ is $F(q_\lambda)=\bigcup_{k\geqslant 0}q_\lambda^{-k}(\Delta_\lambda)$. Let $U$ and $V$ be two different components of $F(q_\lambda)$ such that $V$ is a Jordan domain and $q_\lambda(U)=V$. We claim that $U$ is also a Jordan domain. Indeed, if $V$ contains no critical value or $\overline{V}$ contains at most one critical value, then $U$ is also a Jordan domain by Lemma \ref{lem:Jordan}(2)(3). Assume that $U$ and $\partial U$ each contain a critical point. Then the critical point on $\partial U$ also lies on $\partial\Delta_\lambda$ and hence $q_\lambda(U)=V=\Delta_\lambda$. However, $V\neq\Delta_\lambda$ since $\deg(q_\lambda)=2$. This is a contradiction and the claim is proved. By induction, all components of $F(q_\lambda)$ are Jordan domains. This implies that $J(q_\lambda)=\EC\setminus F(q_\lambda)$ is connected.

To obtain the local connectivity of $J(q_\lambda)$, according to the criterion in \cite[Theorem 4.4, pp.\,112-113]{Why42}, it suffices to prove that the spherical diameter of the Fatou components of $q_\lambda$ tends to zero. The postcritical set of $q_\lambda$ is $\MP(q_\lambda):=\overline{\bigcup_{k\geqslant 1} q_\lambda^{\circ k}(\{1,\lambda\})}$. Without loss of generality, suppose $\partial\Delta_\lambda$ contains the critical point $\lambda$. Then for Case (2), we have
\begin{equation}\label{equ:disjoint}
\overline{\bigcup_{k\geqslant 1} q_\lambda^{\circ k}(1)}\cap \bigcup_{k\geqslant 0} q_\lambda^{-k}(\overline{\Delta}_\lambda)=\emptyset.
\end{equation}
We claim that \eqref{equ:disjoint} is also true for Case (1). Indeed, suppose there exist $p\geqslant 1$ and two Jordan domains $U_0$, $V_0$ such that $q_\lambda^{\circ p}:U_0\to V_0$ is a quadratic-like mapping whose small filled Julia set $K_0$ contains the critical point $1$. It suffices to prove that $\partial\Delta_\lambda\cap K_0=\emptyset$. Otherwise, since the forward orbit of any point in $\partial\Delta_\lambda$ is dense in $\partial\Delta_\lambda$, we would have $\partial\Delta_\lambda\subset K_0$ and the other critical point $\lambda\in\partial\Delta_\lambda$ is also contained in $K_0$. This is a contradiction and the claim is proved.

If $q_\lambda$ is in one of the three cases in the lemma, then the spherical distance between $\MP(q_\lambda)\setminus\partial\Delta_\lambda$ and $\partial\Delta_\lambda$ is positive (including the case that $\MP(q_\lambda)\setminus\partial\Delta_\lambda=\emptyset$). There exists $k\geqslant 1$ such that for any component $V_0$ of $q_\lambda^{-k}(\Delta_\lambda)\setminus q_\lambda^{-(k-1)}(\Delta_\lambda)$, we have $\overline{V}_0\cap\MP(q_\lambda)\subset\partial\Delta_\lambda$. By \cite[Main Lemma]{WYZZ25} and the classical shrinking lemma (see \cite{TY96} or \cite[p.\,86]{LM97}), the spherical diameter of the preimages of $V_0$ under $q_\lambda$ tends to zero uniformly. Since the number of such $V_0$'s  is finite, we conclude that $J(q_\lambda)$ is locally connected.
\end{proof}

In fact, since $q_\lambda$ has a Siegel disk, the Julia set of $q_\lambda$ cannot be a Cantor set and hence is always connected for all $\lambda\in\C\setminus\{1\}$ (see \cite{Yin92}, \cite{Mil93}).

\begin{lem}\label{lem:gasket-Sie}
If the two critical points of $q_\lambda$ have disjoint forward orbits and $q_\lambda$ satisfies one of the cases in Lemma \ref{lem:gasket-pre}, then $J(q_\lambda)$ is a gasket Julia set.
\end{lem}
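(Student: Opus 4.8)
The plan is to verify the three conditions in the definition of a generalized Sierpi\'nski gasket that are not yet in hand, since Lemma \ref{lem:gasket-pre} already supplies the rest: $J(q_\lambda)$ is a locally connected continuum with empty interior (it has empty interior because $q_\lambda$ has a Siegel disk, so $J(q_\lambda)\neq\EC$), and its complementary components, the Fatou components, are Jordan domains, which is precisely condition~(1). Throughout I would use that $F(q_\lambda)=\bigcup_{k\geq 0}q_\lambda^{-k}(\Delta_\lambda)$, that $q_\lambda|_{\Delta_\lambda}$ is a conformal automorphism so that $q_\lambda|_{\partial\Delta_\lambda}$ is conjugate to the rigid rotation by $\theta$ (in particular a homeomorphism with no periodic points), and that $q_\lambda^{-1}(\Delta_\lambda)=\Delta_\lambda\cup\Delta_\lambda'$ where $\Delta_\lambda'$ is the immediate preimage. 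Writing $C:=\{1,\lambda\}\cap\partial\Delta_\lambda$ (nonempty; say $\lambda\in\partial\Delta_\lambda$), a local analysis at the critical points shows $\partial\Delta_\lambda\cap\partial\Delta_\lambda'=C$: any common boundary point of two distinct components of $q_\lambda^{-1}(\Delta_\lambda)$ must be a critical point, and conversely each critical point on $\partial\Delta_\lambda$ lies on $\partial\Delta_\lambda'$ because $q_\lambda$ is $2$-to-$1$ there and only $\Delta_\lambda,\Delta_\lambda'$ can meet at a point of $C$. Thus $\#C\in\{1,2\}$, which will be the constant $N$.

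For condition~(4) I would show that $X_k:=q_\lambda^{-k}(\overline{\Delta}_\lambda)$, which is the union of the closures of all Fatou components of level at most $k$ (the level of a component $W$ being the least $k$ with $q_\lambda^{\circ k}(W)=\Delta_\lambda$), is connected for every $k$. Since $X_k$ is then a union of closed Jordan domains meeting pairwise in finitely many points, its connectedness forces the contact graph on level-$\leq k$ components to be connected, and letting $k\to\infty$ gives condition~(4). I would prove $X_k$ connected by the complementary induction that each component of $Y_k:=\EC\setminus X_k$ is simply connected. The base case $Y_0=\EC\setminus\overline{\Delta}_\lambda$ is a Jordan disk. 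For the step, since $q_\lambda(\lambda)\in\partial\Delta_\lambda\subseteq X_k$, the only critical value that can lie in $Y_k$ is $q_\lambda(1)$, so every component $D$ of $Y_k$ carries at most one critical value; as $\deg q_\lambda=2$ with simple critical points, $q_\lambda^{-1}(D)$ is either two disjoint disks (trivial unbranched double cover) or a single disk (one simple branch point), hence a union of topological disks. Therefore $Y_{k+1}=q_\lambda^{-1}(Y_k)$ again has simply connected components, and $X_{k+1}=\EC\setminus Y_{k+1}$, being the sphere minus a union of open topological disks, is connected. Notably this argument uses only the invariance $q_\lambda(\lambda)\in\partial\Delta_\lambda$ and not the disjoint-orbit hypothesis.

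The remaining conditions~(2) and~(3) are where the disjointness of the two critical orbits enters, in the form: \emph{every $w$ with $q_\lambda^{\circ k}(w)\in C$ for some $k\geq 1$ is non-critical}. Indeed $w\in\{1,\lambda\}$ would force either a periodic point on $\partial\Delta_\lambda$ (impossible for an irrational rotation) or an equality putting one critical point in the forward orbit of the other, since then $q_\lambda^{\circ k+1}$ of one critical point and $q_\lambda$ of the other would coincide, contradicting disjointness. Consequently $q_\lambda$ is a local homeomorphism along the entire backward orbit of $C$, so the $2$-to-$1$ branching never stacks up. With this I would induct on the level of $W$ to prove that $W$ meets $\Delta_\lambda$ in at most $\#C$ points and that no point of $\partial\Delta_\lambda$ lies on the boundary of three components: the base case $W=\Delta_\lambda'$ meets $\Delta_\lambda$ exactly in $C$, and in the step each point of $\partial\Delta_\lambda\cap\partial W$ is a non-critical preimage of a point of $C$, so $q_\lambda$ carries $\partial\Delta_\lambda\cap\partial W$ injectively into $\partial\Delta_\lambda\cap\partial q_\lambda(W)$ (because $q_\lambda|_{\partial\Delta_\lambda}$ is injective), whence the count cannot grow and a triple touching would push forward, after finitely many steps, to a triple touching at a point of $C$, which is excluded. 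For an arbitrary pair $U,V$ I would apply $q_\lambda^{\circ\ell}$ with $\ell=\mathrm{level}(U)$: along the orbit the common boundary points stay non-critical, so $q_\lambda^{\circ\ell}$ is injective near them and maps $\partial U\cap\partial V$ injectively into $\partial\Delta_\lambda\cap\partial q_\lambda^{\circ\ell}(V)$ (in the remaining case $q_\lambda^{\circ\ell}(U)=q_\lambda^{\circ\ell}(V)$, the two are the preimages of one component and can meet only at a critical point). Hence $\#(\partial U\cap\partial V)\leq N\leq 2$, giving condition~(2), and no three components share a point, giving condition~(3).

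The hard part is the behavior at the critical points. Without control there, the $2$-to-$1$ branching of $q_\lambda$ could manufacture triple touchings or unboundedly many common boundary points, and one must also reconcile the two sub-cases: one critical point on $\partial\Delta_\lambda$ yields $N=1$, while both on $\partial\Delta_\lambda$ yields $N=2$ (where $\Delta_\lambda$ and $\Delta_\lambda'$ genuinely meet twice). The disjoint-orbit hypothesis is exactly what prevents a critical point from reappearing in the backward orbit of $C$, and this is the linchpin that makes every induction in the count close; by contrast, the connectivity argument is robust and requires only that $\partial\Delta_\lambda$ be invariant.
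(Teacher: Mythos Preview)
Your proof is correct and follows the same overall strategy as the paper: use Lemma~\ref{lem:gasket-pre} for local connectivity and the Jordan property, then verify conditions (2), (3), (4) of the gasket definition, with the disjoint-orbit hypothesis entering only in (2) and (3) through the fact that the backward orbit of $C=\{1,\lambda\}\cap\partial\Delta_\lambda$ contains no critical points. Your treatment of (2) and (3) matches the paper's in substance; the paper phrases (3) as ``the forward orbit of a common boundary point of three components must contain two critical points,'' which is exactly your push-forward argument, and for (2) the paper iterates until one component becomes $\Delta_\lambda$ and bounds by $\#C\leq 2$, as you do.

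The one genuine difference is in condition (4). The paper builds an explicit chain of touching Fatou components from any $U_0$ back to $\Delta_\lambda$: if $q_\lambda^{\circ k}(U_0)=\Delta_\lambda'$, then some $U_1$ with $q_\lambda^{\circ(k-1)}(U_1)=\Delta_\lambda$ touches $U_0$ (pulling back the contact $\partial\Delta_\lambda\cap\partial\Delta_\lambda'\neq\emptyset$), and one descends inductively. You instead show $q_\lambda^{-k}(\overline{\Delta}_\lambda)$ is connected by proving its complement has only simply connected components, using that each such component carries at most one critical value. This is precisely the argument the paper deploys in its proof of Theorem~\ref{thm:McM-family}, so both methods are in the paper's toolkit; your version makes transparent that (4) does not need the disjoint-orbit hypothesis, while the paper's chain argument is marginally more direct and avoids invoking Riemann--Hurwitz.
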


\begin{proof}
Based on Lemma \ref{lem:gasket-pre}, it suffices to verify the last three properties in the definition of generalized Sierpi\'{n}ski gaskets.

Note that $F(q_\lambda)=\bigcup_{k\geqslant 0}q_\lambda^{-k}(\Delta_\lambda)$.
Since $\partial\Delta_\lambda$ contains at most two critical points and $q_\lambda:\Delta_\lambda\to\Delta_\lambda$ is conformal, it follows that $\sharp(\partial U\cap\partial\Delta_\lambda)\leqslant 2$ for any component $U$ of $F(q_\lambda)$. Let $U_1$ and $U_2$ be two different components of $F(q_\lambda)\setminus\Delta_\lambda$ such that $\partial U_1\cap\partial U_2\neq\emptyset$, and $k_1$, $k_2$ be the smallest integers such that $q_\lambda^{\circ k_1}(U_1)=\Delta_\lambda=q_\lambda^{\circ k_2}(U_2)$, where $k_2\geqslant k_1\geqslant 1$. Then at least one of the maps $q_\lambda^{\circ k_1}:U_1\to\Delta_\lambda$ and $q_\lambda^{\circ k_1}:U_2\to q_\lambda^{\circ k_1}(U_2)$ is conformal.
If $k_2>k_1$, then $\sharp\big(\partial \Delta_\lambda\cap q_\lambda^{\circ k_1}(\partial U_2)\big)\leqslant 2$.
If $k_2=k_1$, then there exists a smallest $k_0\leqslant k_1$ such that $q_\lambda^{\circ k_0}(U_1)=q_\lambda^{\circ k_0}(U_2)$.
In both cases we have $\sharp(\partial U_1\cap\partial U_2)\leqslant 2$. Hence $\sharp(\partial U\cap\partial V)\leqslant 2$ for any different components $U$, $V$ of $F(q_\lambda)$.
In particular, $\sharp(\partial U\cap\partial V)\leqslant 1$ if $q_\lambda$ satisfies one of the first two cases in Lemma \ref{lem:gasket-pre}.

Assume that there are three Fatou components $U_0$, $U_1$ and $U_2$ of $q_\lambda$ having a common boundary point $z_0$. Since these Fatou components are eventually iterated to $\Delta_\lambda$, the forward orbit of $z_0$ must contain two critical points, which contradicts to the assumption that the two critical points of $q_\lambda$ have disjoint forward orbits.
Thus no three Fatou components of $q_\lambda$ have a common boundary point.

Let $\Delta_\lambda'\neq\Delta_\lambda$ be the unique component of $q_\lambda^{-1}(\Delta_\lambda)$ attaching at a critical point on $\partial\Delta_\lambda$. For any component $U_0$ of $F(q_\lambda)\setminus q_\lambda^{-1}(\Delta_\lambda)$, there exists an integer $k\geqslant 1$ such that $q_\lambda^{\circ k}(U_0)=\Delta_\lambda'$. Then there exists a Fatou component $U_1$ such that
\begin{equation}
\partial U_0\cap\partial U_1\neq\emptyset \text{\quad and\quad} q_\lambda^{\circ (k-1)}(U_1)=\Delta_\lambda.
\end{equation}
Inductively, there exists a sequence of Fatou components $U_0$, $U_1$, $\cdots$, $U_\ell=\Delta_\lambda$, where $\ell\geqslant 1$ such that
$\partial U_i\cap \partial U_{i+1}\neq\emptyset \text{ for all } 0\leq i\leqslant \ell-1$.
Thus the contact graph corresponding to $J(q_\lambda)$ is connected. The proof is complete.
\end{proof}

If $\lambda=4/e^{2\pi\ii\theta}-1$, then $q_\lambda$ has a finite critical orbit $1\mapsto \frac{1+\lambda}{2}\mapsto\infty\in\Delta_\lambda$. Hence according to Lemmas \ref{lem:gasket-pre}(3) and \ref{lem:gasket-Sie}, the Julia set of $q_\lambda$ is a generalized Sierpi\'{n}ski gasket. In fact, in this case $q_\lambda$ is conjugate to $z\mapsto e^{2\pi\ii\theta}z/(1-z)^2$. See the right picture of Figure \ref{Fig:Julia-gasket-Siegel} (where $\theta=(\sqrt{5}-1)/2$ is of bounded type). Such Julia sets have Hausdorff dimension strictly less than two (see \cite[Theorem 3.14]{Lim24b}).

\vskip0.2cm
To obtain the gasket Julia sets with positive area, we use polynomial-like renormalization theory.
The following result concerning the universality of the Mandelbrot set was proved by McMullen.

\begin{lem}[{\cite[Theorem 1.3]{McM00b}}]\label{lem:McM-univ}
There exist $p\geqslant 1$ and a homeomorphic copy $\mathcal{M}$ of the Mandelbrot set $M$ in parameter space $\C\setminus\{1\}$ of $q_\lambda$ such that for each $c\in M$, there exist $\lambda=\lambda(c)\in\MM$ and two Jordan domains $U_\lambda$, $V_\lambda$ such that $q_\lambda^{\circ p}:U_\lambda\to V_\lambda$ is a quadratic-like mapping and hybird equivalent to $Q_c(z)=z^2+c$.
\end{lem}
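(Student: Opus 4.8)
The plan is to realize the renormalizations $\{q_\lambda^{\circ p}\}$ as a \emph{Mandelbrot-like family} in the sense of Douady and Hubbard, and then invoke their theorem that the connectedness locus of such a family is homeomorphic to the Mandelbrot set $M$ via a continuous straightening map. The object to be renormalized is the \emph{free} critical point of $q_\lambda$, i.e. the one not forced onto $\partial\Delta_\lambda$; the Siegel disk $\Delta_\lambda$ and its preimages are irrelevant to this local picture. The first preliminary observation is that the family is non-trivial: as $\lambda$ ranges over $\C\setminus\{1\}$ the dynamical behavior of the free critical point genuinely varies, so its bifurcation locus is non-empty and contains the seeds that produce small Mandelbrot copies.

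First I would locate a suitable \emph{center} $\lambda_*\in\C\setminus\{1\}$ at which the free critical point is periodic of some period $p\geqslant 1$, so that $q_{\lambda_*}$ has a super-attracting cycle of period $p$ disjoint from $\overline{\Delta}_{\lambda_*}$. Near $\lambda_*$ the first-return map $q_{\lambda_*}^{\circ p}$ to a small topological disk around this critical point is a proper degree-two branched cover, behaving like $z\mapsto z^2$ to leading order. Thickening this picture, I would choose Jordan domains $U_{\lambda_*}\Subset V_{\lambda_*}$ with $q_{\lambda_*}^{\circ p}:U_{\lambda_*}\to V_{\lambda_*}$ a quadratic-like mapping of degree two, using Lemma \ref{lem:Jordan} to guarantee that the relevant preimage components are Jordan domains. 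I would then propagate this structure over a Jordan neighborhood $\Lambda$ of $\lambda_*$ in parameter space: by holomorphic dependence of $q_\lambda$ on $\lambda$, the implicit function theorem, and the $\lambda$-lemma for holomorphic motions, the boundary curves $\partial U_\lambda$ and $\partial V_\lambda$ move holomorphically so that $q_\lambda^{\circ p}:U_\lambda\to V_\lambda$ remains quadratic-like of degree two for every $\lambda\in\Lambda$.

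The decisive analytic point is the Douady--Hubbard nondegeneracy (transversality) condition: writing $v(\lambda)$ for the critical value of $q_\lambda^{\circ p}$ relative to the domain $V_\lambda$, the family is Mandelbrot-like precisely when, after suitable normalization, $\lambda\mapsto v(\lambda)$ is a proper map of degree one, equivalently when the critical value winds exactly once around $\partial V_\lambda$ as $\lambda$ traverses $\partial\Lambda$. Granting this, the straightening map $\chi:\Lambda\to\C$ sending $\lambda$ to the $c$ for which $q_\lambda^{\circ p}$ is hybrid equivalent to $Q_c$ is continuous and proper of degree one, so the Douady--Hubbard theorem gives that $\MM:=\chi^{-1}(M)$ is homeomorphic to $M$; for each $c\in M$ one sets $\lambda(c):=\chi^{-1}(c)\in\MM$, and $q_{\lambda(c)}^{\circ p}:U_{\lambda(c)}\to V_{\lambda(c)}$ is the desired quadratic-like map hybrid equivalent to $Q_c$. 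The main obstacle is exactly this degree-one step. Proving it requires a quasiconformal rigidity argument of McMullen--Sullivan type: were the winding number not one, or $\chi$ not injective on the relevant locus, one could build a nontrivial holomorphic motion of the dynamically marked points deforming $q_\lambda$ within its hybrid class without changing its conformal conjugacy class, contradicting the rigidity of the postcritical behavior (and, along $\partial\Delta_\lambda$, the bounded-type Siegel rigidity of \cite{Zha11}). Assembling the resulting quasiconformal conjugacies into a genuine homeomorphism $\MM\cong M$, rather than merely a continuous bijection, is the subtle heart of McMullen's argument and is where the bulk of the work resides.
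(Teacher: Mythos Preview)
The paper does not supply a proof of this lemma: it is quoted as \cite[Theorem~1.3]{McM00b} (McMullen's universality theorem for the Mandelbrot set) and used as a black box in the proof of Theorem~\ref{thm:area-dim} that immediately follows. There is therefore nothing in the paper to compare your argument against.

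Your outline is a fair high-level summary of the Douady--Hubbard/McMullen machinery that underlies such results: locate a super-attracting center $\lambda_*$, build a tube of quadratic-like first-return maps over a parameter disk $\Lambda$, and verify the properness/winding-number condition so that the straightening map $\chi:\Lambda\to\C$ restricts to a homeomorphism from $\chi^{-1}(M)$ onto $M$. You also correctly flag the degree-one/transversality step as the genuine obstacle. Two minor remarks. First, to invoke McMullen's theorem one only needs that some holomorphically varying critical point of $q_\lambda$ is \emph{active} at some parameter; this is immediate here since the family is visibly non-constant in moduli (the paper itself exhibits, just before this lemma, a parameter at which the free critical orbit is finite and lands in $\Delta_\lambda$). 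Second, your appeal to Lemma~\ref{lem:Jordan} to control $U_\lambda$, $V_\lambda$ is not needed---for the quadratic-like construction one simply takes $V_\lambda$ to be a small round disk near the super-attracting point and pulls back. These are cosmetic; the substance of your sketch matches the standard proof, and the paper offers no alternative argument of its own.
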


\begin{proof}[Proof of Theorem \ref{thm:area-dim}]
By Theorem \ref{thm:positive-area}, let $c\in M$ such that $Q_c(z)=z^2+c$ has a Cremer point or is infinitely renormalizable and that $J(Q_c)$ has positive area. By Lemma \ref{lem:McM-univ}, there exist $p\geqslant 1$, $\lambda=\lambda(c)\in\C\setminus\{1\}$ and two Jordan domains $U_\lambda$, $V_\lambda$ such that $q_\lambda^{\circ p}:U_\lambda\to V_\lambda$ is a quadratic-like mapping and hybird equivalent to $Q_c(z)=z^2+c$. Since quasiconformal mappings are absolutely continuous with respect to the $2$-dimensional Lebesgue measure, this implies that $J(q_\lambda)$ has positive area. By Lemma \ref{lem:gasket-Sie}, $J(q_\lambda)$ is a Sierpi\'{n}ski gasket.
\end{proof}

We would like to mention that one can also use the family $b_\mu(z)=\mu/(z^2+2z)$, where $\mu\in\C\setminus\{0\}$, to generate the gasket Julia sets (see \cite[Section 5]{LZ25}), and even those with positive area. Each $b_\mu$ has a super-attracting $2$-cycle $\{0,\infty\}$ and the Julia set $J(b_\mu)$ contains a \textit{basilica-like} structure (see \cite{AY09} and the references therein). One can use the similar idea as Lemmas \ref{lem:gasket-pre} and \ref{lem:gasket-Sie} to ``arrange" a copy of a quadratic Julia set with positive area in the gasket Julia set of $b_\mu$.


\section{A criterion to generate gasket Julia sets}\label{sec:criterion}

In this section, we first give a criterion to generate some rational maps having gasket Julia sets and then use this criterion to study a specific rational family.

\subsection{A criterion}

Comparing the conditions in Theorem \ref{thm:criterion} and those in \cite[Theorem A]{Yan18}, one may observe that they are very similar but also have some differences: the latter rational maps are hyperbolic while the former rational maps are geometrically finite but not hyperbolic. Similar to Corollary \ref{cor:mcm-lc} and Lemma \ref{lem:gasket-pre}, we first establish the following result.

\begin{lem}\label{lem:criterion-lc}
Let $f$ be the rational map in Theorem \ref{thm:criterion}. Then $J(f)$ is connected, locally connected and every Fatou component of $f$ is a Jordan domain.
\end{lem}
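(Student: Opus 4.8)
The plan is to first determine the global dynamics of $f$. Condition (4) forces the forward orbit of every critical point except $c$ to meet $U$, so each such critical point lies in the Fatou set; hence the only critical point of $f$ in $J(f)$ is $c$, and since $c$ is strictly preperiodic, $f$ is geometrically finite. The postcritical set therefore meets $J(f)$ in a finite set, which rules out Siegel disks and Herman rings, whose boundaries are infinite subsets of $J(f)$ contained in the closure of the postcritical set; in particular $U$ is not a Siegel disk, since an irrational rotation on $\partial U$ would give $c$ an infinite orbit, contradicting its strict preperiodicity. Thus $U$ is the immediate basin of an attracting or parabolic fixed point, and no further attracting or parabolic cycle can exist, because the immediate basin of such a cycle would need a critical point in it, whereas every critical orbit other than that of $c$ enters $U$. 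Consequently $F(f)=\bigcup_{k\geqslant 0}f^{-k}(U)$. Combining hypothesis (1) with geometric finiteness, the local connectivity of $J(f)$ follows from \cite{TY96}, exactly as in Corollary \ref{cor:mcm-lc}; and since $J(f)$ is connected, every Fatou component is simply connected.

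It then remains to show that every Fatou component is a Jordan domain, and for this I would reduce the whole statement to the single component $U$. A locally connected boundary of a simply connected domain is a Jordan curve precisely when it has no \emph{cut point}, i.e.\ when the domain has a single access to each of its boundary points. Because $c$ is the only critical point of $f$ in $J(f)$, the map $f$ is a local homeomorphism along $J(f)$ away from $c$, so it carries cut points of $\partial W$ to cut points of $\partial f(W)$; and at the simple critical point $c$ the two local sheets over the $U$-side of $f(c)$ are precisely $U$ and the second preimage component $V$ furnished by condition (2), each meeting $c$ in a single access. Arguing by induction along $F(f)=\bigcup_k f^{-k}(U)$, if $\partial f(W)$ has no cut point then neither does $\partial W$: a cut point at a non-critical boundary point would push forward to a cut point of $\partial f(W)$, while the unique critical point $c$ contributes only one access. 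Hence, once $\partial U$ is known to be a Jordan curve, local connectivity and simple connectivity upgrade this to the conclusion that all Fatou components are Jordan domains.

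The base case — that the fixed component $U$ has no cut point on its boundary — is the main obstacle, since the push-forward argument only shows that the cut points of $\partial U$ form a forward-invariant set and yields no contradiction by itself. To settle it I would exploit that $f$ restricts to an expanding covering of degree $\deg(f|_U)$ of the locally connected curve $\partial U$: a cut point would produce a biaccessible boundary point whose forward orbit remains biaccessible, hence, by expansion together with the finiteness of the postcritical set in $J(f)$, eventually periodic; the cyclic accesses of such a point would then have to be separated by a critical point lying on $\partial U$, but $c$ is the unique such critical point and has only a single access, so no nontrivial identification of accesses can be seeded and $\partial U$ must be a Jordan curve. This is the geometrically finite analogue of the carpet criterion in \cite{Yan18}. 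I expect the genuinely delicate point to be the parabolic case, where the expansion degenerates at the parabolic fixed point on $\partial U$, and one must instead run the argument through the attracting-petal structure and the puzzle and shrinking-lemma techniques underlying \cite{TY96}.
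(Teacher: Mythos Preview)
Your reduction is sound through the first paragraph: geometric finiteness, the classification of $U$ as an attracting or parabolic immediate basin, the exclusion of other periodic Fatou components, and the appeal to \cite{TY96} for local connectivity all match the paper. The inductive step from $U$ to its preimages is also correct in spirit, though the paper handles it more directly: once $\partial U$ is a Jordan curve it observes that $\partial U\cap\partial V=\{c_0\}$ (any common boundary point must be critical), checks that $\partial V$ is locally a Jordan arc even at the simple critical point $c_0$, and then invokes Lemma~\ref{lem:Jordan}(1) for all further preimages, whose boundaries contain no critical points at all.

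The genuine gap is the base case. You attempt to show $\partial U$ is a Jordan curve by a biaccessibility argument that never invokes condition~(3), whereas the paper obtains this step by transporting the proof of \cite[Theorem~A]{Yan18}, which is explicitly built on conditions (1)--(3). Two of your claims are unjustified. First, that a biaccessible point of $\partial U$ is eventually periodic ``by expansion together with the finiteness of the postcritical set in $J(f)$'': biaccessible boundary points are not in general eventually periodic, and nothing in your sketch forces this. Second, that the accesses at a periodic biaccessible point must be ``separated by a critical point lying on $\partial U$'': this is a polynomial-style wake argument whose analogue for the invariant boundary of a rational Fatou component requires real work, and it is precisely here that the degree comparison $\deg(f|_W)>\deg(f|_U)$ of condition~(3) enters in the \cite{Yan18} argument. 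You flag the parabolic case as delicate, but already in the attracting case your sketch is incomplete. To close the gap you should either carry over the \cite{Yan18} mechanism using condition~(3), or supply a rigorous lamination argument showing that a nonempty forward-invariant identification on $\partial\mathbb{D}$ forces a critical identification on $\partial U$; the latter is substantially more than what you have written.
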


\begin{proof}
Suppose that $f$ satisfies all the assumptions in Theorem \ref{thm:criterion}. Then $f$ is geometrically finite and $U$ is the only periodic Fatou component of $f$, which is either attracting, super-attracing or parabolic. The Julia set of $f$ is connected and locally connected according to \cite{TY96}.
By a completely similar argument (which is based on the first three conditions) to the proof of \cite[Theorem A, p.\,2134]{Yan18}, the Fatou component $U$ is a Jordan domain.

Since $f(U)=f(V)=U$, it follows that every point in $\partial U\cap\partial V$ is a critical point. By the condition (4), $\partial U\cap\partial V$ consists of exactly one point $c_0$.
Moreover, $\partial V\setminus\{c_0\}$ contains no critical points since the rest critical points are attracted by $U$. Since $\partial U$ is a Jordan curve and $c_0$ is simple, it follows that $\partial V$ is locally a Jordan arc. Thus $V$ is a Jordan domain.

Let $W$ be a component of $f^{-1}(V)$. Note that $\partial W$ does not contain critical points. It follows that $\partial W$ is a Jordan domain by Lemma \ref{lem:Jordan}(1). Inductively, every component of $\bigcup_{k\geqslant 1}f^{-k}(V)$ does not contain critical points. Thus all Fatou components of $f$ are Jordan domains.
\end{proof}

\begin{proof}[Proof of Theorem \ref{thm:criterion}]
By Lemma \ref{lem:criterion-lc}, it suffices to verify the last three properties in the definition of generalized Sierpi\'{n}ski gaskets.

Note that $F(f)=\bigcup_{k\geqslant 0}f^{-k}(U)$ and $f^{-1}(U)=U\cup V$. There exists a number $D\geqslant 1$ such that for any $k\geqslant 1$ and any component $U'$ of $f^{-k}(V)$, we have
\begin{equation}\label{equ:deg-f-k}
\deg(f^{\circ k}: U'\to V)\leqslant D.
\end{equation}
Let $U_1$ and $U_2$ be two different Fatou components such that $\partial U_1\cap\partial U_2\neq\emptyset$, and $k_1$, $k_2$ be the smallest integers such that $f^{\circ k_1}(U_1)=U=f^{\circ k_2}(U_2)$, where $k_2\geqslant k_1\geqslant 0$.
If $k_2>k_1$, then we have $f^{\circ (k_2-1)}(U_1)=U$ and $f^{\circ (k_2-1)}(U_2)=V$. Since $\partial U\cap\partial V=\{c_0\}$, by \eqref{equ:deg-f-k}, we have $\sharp\big(\partial U_1\cap \partial U_2\big)\leqslant D$.
If $k_2=k_1$, then $f^{\circ (k_1-1)}(U_1)=V=f^{\circ (k_1-1)}(U_2)$ and $k_1\geqslant 2$ since $U_1\neq U_2$. This implies that $\bigcup_{j=0}^{k_1-2}f^{\circ j}(\partial U_1\cup\partial U_2)$ contains a critical point. However, since $c_0$ is simple, there are exactly two Fatou components $U$ and $V$ attaching at $c_0$, which is a contradiction.
Thus $\sharp(\partial U_1\cap\partial U_2)\leqslant D$ for any different components $U_1$, $U_2$ of $F(f)$.

Assume that there are three Fatou components $U_0$, $U_1$ and $U_2$ of $f$ having a common boundary point $z_0$. Since these Fatou components are eventually iterated to $U$ by the iteration of $f$, the forward orbit of $z_0$ must contain at least two critical points, which contradicts to the condition (4).

The rest proof is similar to that of Lemma \ref{lem:gasket-Sie}. For any Fatou component $U_0$ which is different from $U$ and $V$, there exists a smallest integer $k\geqslant 1$ such that $f^{\circ k}(U_0)=V$. Then there exists a Fatou component $U_1$ such that
\begin{equation}
\partial U_0\cap\partial U_1\neq\emptyset \text{\quad and\quad} f^{\circ (k-1)}(U_1)=U.
\end{equation}
Inductively, there exists a sequence of Fatou components $U_0$, $U_1$, $\cdots$, $U_\ell=U$, where $\ell\geqslant 1$ such that
$\partial U_i\cap \partial U_{i+1}\neq\emptyset \text{ for all } 0\leqslant i\leqslant \ell-1$.
Thus the contact graph corresponding to $J(f)$ is connected. The proof is complete.
\end{proof}

For Theorem \ref{thm:criterion}, if $\partial U$ is allowed to contain more critical points, it is not hard to verify that one can still obtain gasket Julia sets by replacing the condition (4) by
\begin{itemize}
\item $\partial U$ contains at least one critical point and all critical points on $\partial U$ are simple and strictly preperiodic;
\item the forward orbit of every critical point on $\partial U$ does not contain any other critical point; and
\item the forward orbit of every critical point outside of $\partial U$ intersects with $U$.
\end{itemize}

\subsection{Specific families}

In this subsection, we study the family $g_\eta$ of rational maps introduced in the introduction and prove that in some cases the Julia sets are generalized Sierpi\'nski gaskets.
Recall that
\begin{equation}
g_\eta(z)=\frac{a(z-1)^3(z-b)^3}{z^4},
\end{equation}
where $a=\frac{\eta(\eta-4)^3}{27(\eta-1)^6}$, $b=\frac{\eta(1+2\eta)}{4-\eta}$ are chosen with $\eta\in\C\setminus\{0,1,4,-\tfrac{1}{2}\}$ such that $g_\eta$ has the critical orbits
\begin{equation}\label{equ:crit-orbit}
b\overset{(3)}{\longmapsto} 0\overset{(4)}{\longmapsto}\infty\overset{(2)}{\longmapsto}\infty\text{\quad and\quad}\eta\overset{(2)}{\longmapsto} 1\overset{(3)}{\longmapsto} 0,
\end{equation}
and $c=\frac{2(1+2\eta)}{\eta-4}$ is a free critical point with local degree $2$.
Let $U_\eta$ be the immediate super-attracting basin of $\infty$. We will use Theorem \ref{thm:criterion} to prove that if $c\in\partial U_\eta$ is strictly preperiodic, then $J(g_\eta)$ is a generalized Sierpi\'{n}ski gasket. See Figure \ref{Fig:Julia-gasket-criterion}.

\begin{figure}[!htpb]
  \setlength{\unitlength}{1mm}
  \centering
  \includegraphics[width=0.47\textwidth]{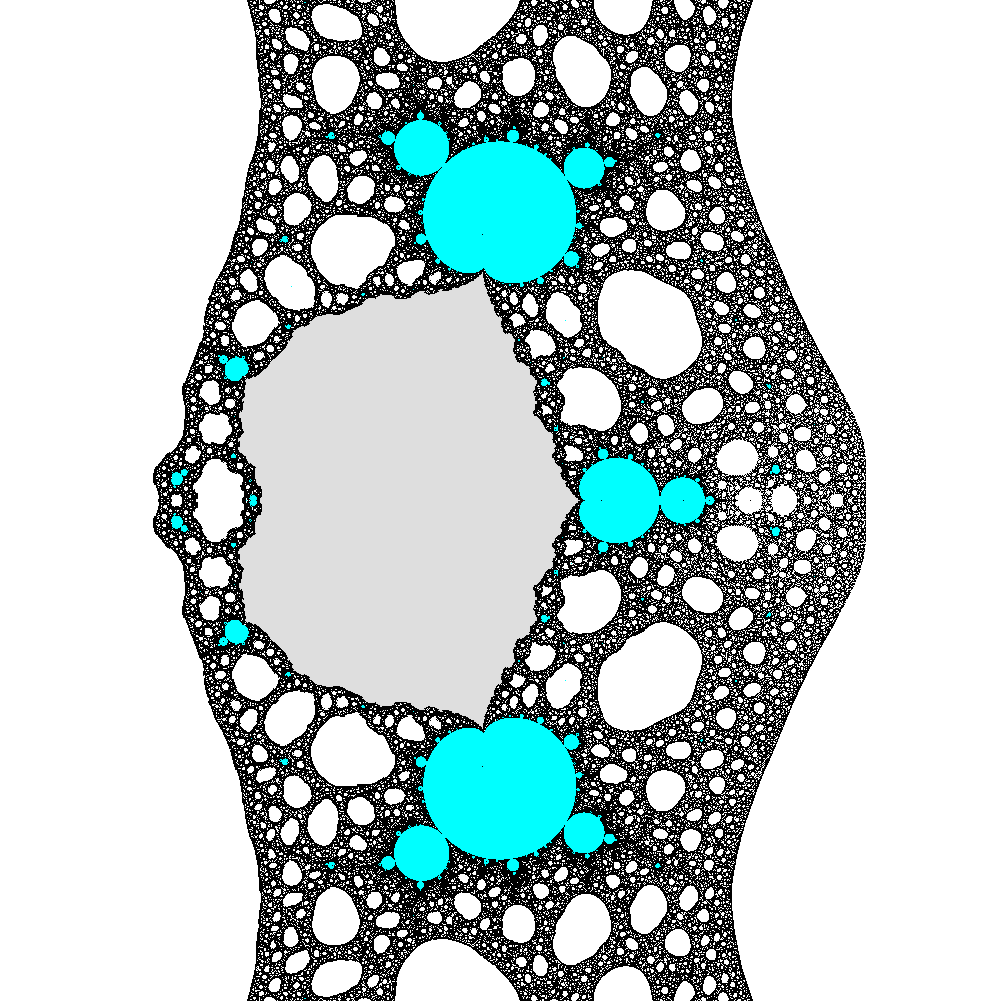}\quad
  \includegraphics[width=0.47\textwidth]{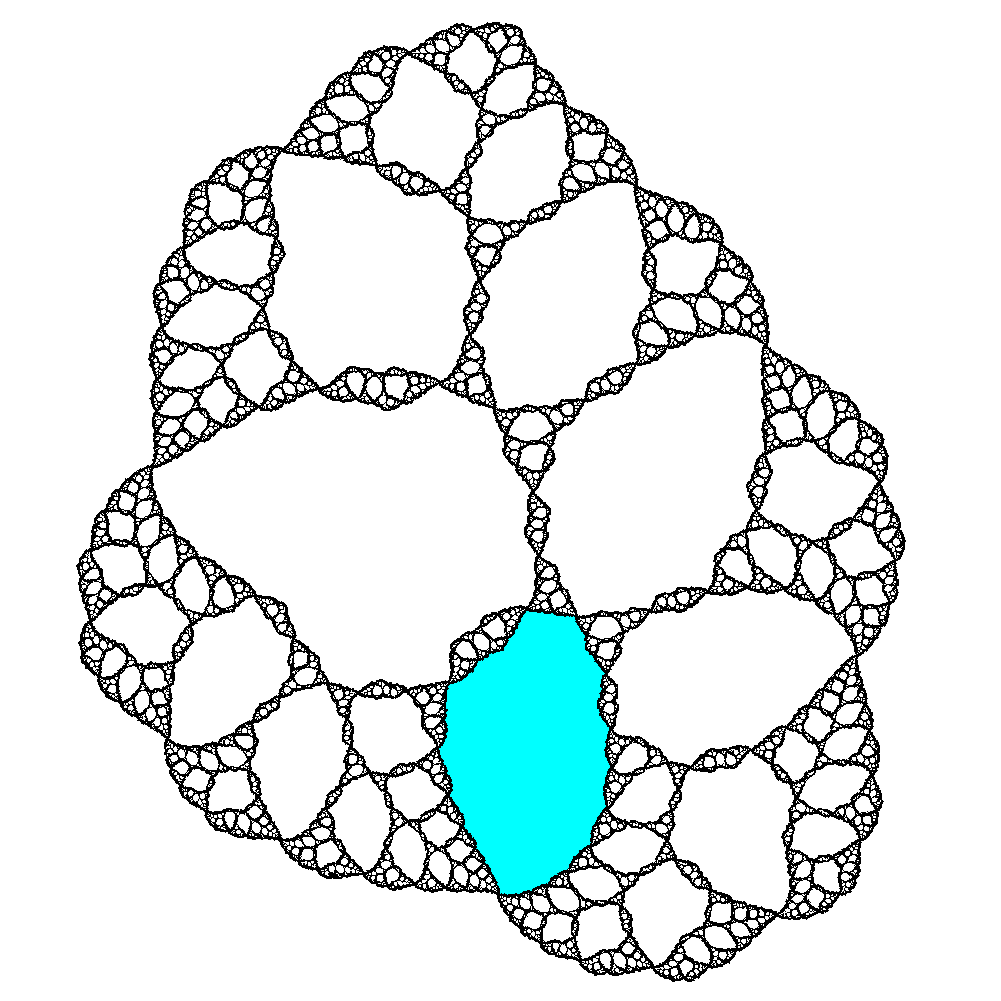}
  \put(-112,32){$\MH_1$}
  \put(-33,14.5){$V_\eta$}
  \put(-53,4){$U_\eta$}
  \put(-36,4){$c$}
  \caption{Left: The parameter plane of $g_\eta$, where the gray domain $\MH_1$ containing a punctured neighborhood of $1$ is the Cantor locus (all critical points are contained in $U_\eta$). Right: The dynamical plane of a specific $g_{\eta}$, where $\eta\approx 0.68771873+ 1.49006605i\in\partial\MH_1$ is chosen such that $c\in\partial U_\eta$, $g_{\eta}^{\circ 3}(c)=g_{\eta}^{\circ 2}(c)$ and $J(g_\eta)$ is a generalized Sierpi\'{n}ski gasket. The immediate preimage $V_\eta$ of $U_\eta$ is marked cyan.}
  \label{Fig:Julia-gasket-criterion}
\end{figure}

\begin{proof}[Proof of Corollary \ref{cor:gasket-2}]
Let $\eta$ be chosen such that the free critical point $c$ lies on $\partial U_\eta$ and is strictly preperiodic. Then $g_\eta$ is critically finite and hence $J(g_\eta)$ is connected (see \cite[p.\,35]{McM88}).

We claim that $0$ and $\infty$ are in different Fatou components. Otherwise, $U_\eta$ is completely invariant. Since $U_\eta$ is simply connected, by the Riemann-Hurwitz formula, $U_\eta$ contains exactly $5$ critical points (counting with multiplicity). However, by \eqref{equ:crit-orbit}, $U_\eta$ contains at least $6$ critical points  (counting with multiplicity), which is a contradiction. Hence $0$ and $\infty$ are in different Fatou components. Let $V_\eta$ be the Fatou component containing $0$. Then we have $g_\eta^{-1}(U_\eta)=U_\eta\cup V_\eta$.

Since $g_\eta^{-1}(0)=\{1,b\}$, by \eqref{equ:crit-orbit}, we have
\begin{equation}
\deg(g_\eta:W_\eta^1\to V_\eta)=\deg(g_\eta:W_\eta^b\to V_\eta)=3>2=\deg(g_\eta:U_\eta\to U_\eta),
\end{equation}
where $W_\eta^1$ and $W_\eta^b$ are the Fatou components containing $1$ and $b$ respectively.

The set of critical points of $g_\eta$ is $\{\infty, 0, 1,b,c,\eta\}$. Except the critical point $c$, the remaining critical points are attracted by $\infty$. Moreover, by the assumption, $c\in\partial U_\eta$ is simple and preperiodic. Therefore, by Theorem \ref{thm:criterion}, the Julia set of $g_\eta$ is a generalized Sierpi\'{n}ski gasket.
\end{proof}

To see the existence of the parameters $\eta$'s satisfying Corollary \ref{cor:gasket-2}, we consider the hyperbolic component $\MH_1$ of $g_\eta$ containing a punctured neighborhood of $1$.
In fact, if $\eta\to 1$, then $a\to \infty$ and $b\to 1$. Hence if $|\eta-1|>0$ is small, then $g_\eta$ can be seen as a small perturbation of $z\mapsto a (z-1)^6/z^4$. Note that the latter lies in the Cantor locus of the varied family of McMullen maps (see \cite{Ste06}). Thus $g_\eta$ lies in the Cantor locus if $|\eta-1|>0$ is small. This confirms the existence of the Cantor locus $\MH_1$ (see the left picture in Figure \ref{Fig:Julia-gasket-criterion}). Moreover, by applying the Riemann-Hurwitz formula, it is not hard to see that $J(g_\eta)$ is connected for all $\eta\in\partial\MH_1$.

Note that $g_\eta$ has exactly one free simple critical point $c$.
By a similar proof of \cite[Section 5]{BDL06}, one can construct a typical conformal isomorphism $\Phi:\MH_1\to\C\setminus\overline{\D}$ and define parameter rays in $\MH_1$.
Similar to \cite{PR00}, one can show that the landing points of some rational parameter rays on $\partial\MH_1$ are \textit{Misiurewicz} parameters (i.e., the free critical point $c$ is strictly preperiodic in the Julia set) and there are corresponding dynamical rays in $U_\eta$'s landing at the critical point $c\in\partial U_\eta$. This indicates the existence of the parameters $\eta$'s satisfying Corollary \ref{cor:gasket-2}.

\medskip
For Theorem \ref{thm:criterion}, the condition (4) requiring that the critical point on $\partial U$ has local degree $2$ \textit{is necessary}.
As a counterexample, inspired by Corollary \ref{cor:gasket-2}, we consider the following one-dimensional family of rational maps:
\begin{equation}
g_a(z)=\frac{a(z-1)^3(z-b)^3}{z^4}, \text{\quad where }
b=-17-12\sqrt{2}
\end{equation}
and $a\in\C\setminus\{0\}$ is the parameter. One can see that $g_a$ has the critical orbits
\begin{equation}
b\overset{(3)}{\longmapsto} 0\overset{(4)}{\longmapsto}\infty\overset{(2)}{\longmapsto}\infty\text{\quad and\quad} 1\overset{(3)}{\longmapsto} 0,
\end{equation}
and $c=4+3\sqrt{2}$ is a free critical point with local degree $3$. Let $U_a$ and $V_a$ be the Fatou components of $g_a$ containing $\infty$ and $0$ respectively.
If $c\in\partial U_a$ is critically finite, then $U_a$ is a Jordan domain (similar to Lemma \ref{lem:criterion-lc}) but $V_a$ is not (by the local mapping property at $c\in\partial U_a\cap \partial V_a$). Hence $J(g_a)$ is not a generalized Sierpi\'{n}ski gasket.





\bibliographystyle{amsalpha}
\bibliography{E:/Latex-model/Ref1}

\end{document}